\newcolumntype{A}{ >{$} r <{$} @{} >{${}} l <{$} }
\numberwithin{equation}{section}
\newtheorem{theorem}[equation]{Theorem}
\newtheorem{proposition}[equation]{Proposition}
\newtheorem{corollary}[equation]{Corollary}
\theoremstyle{definition}
\newtheorem{rmk}[equation]{Remark}
\newenvironment{remark}[1][]{\begin{rmk}[#1] \pushQED{\qed}}{\popQED \end{rmk}}
\newcommand{\bF}{\mathbf{F}}
\newcommand{\fF}{\mathfrak{F}}
\newcommand{\bN}{\mathbf{N}}
\newcommand{\fP}{\mathfrak{P}}
\newcommand{\bQ}{\mathbf{Q}}
\newcommand{\bZ}{\mathbf{Z}}
\newcommand{\fb}{\mathfrak{b}}
\newcommand{\arxiv}[1]{\href{http://arxiv.org/abs/#1}{{\tiny\tt arXiv:#1}}}
\newcommand{\DOI}[1]{\href{http://doi.org/#1}{\color{purple}{\tiny\tt DOI:#1}}}
\newcommand{\myuline}[1]{%
  \uline{\phantom{#1}}%
  \llap{\contour{white}{#1}}%
}
\DeclareMathOperator{\uRep}{\text{\myuline{\rm Rep}}}
\DeclareMathOperator{\uPerm}{\underline{Perm}}
\renewcommand{\phi}{\varphi}
\DeclareMathOperator{\Aut}{Aut}
\renewcommand{\Vec}{\mathrm{Vec}}
\newcommand{\defn}[1]{\emph{#1}}
\let\lbb\llbracket
\let\rbb\rrbracket
\newcommand\m[1]{$\dot{#1}$}
\def\mm#1{\r{#1}}
\title{The thirty-seven measures on permutations}
\author{Andrew Snowden}
\thanks{The author was supported by NSF grant DMS-2301871.}
\date{April 12, 2024}
\begin{document}

\begin{abstract}
In recent work with Harman, we introduced a notion of measure on a class of finite relational structures. In this note, we consider measures on the class of permutations, i.e., finite sets with two total orders. Using a method of Nekrasov, we show that there are exactly~37 measures. Two of these measures lead to a new pre-Tannakian tensor category.
\end{abstract}

\maketitle
\tableofcontents

\section{Introduction}

\subsection{Measures} \label{ss:meas}

Let $\fF$ be a class of finite relational structures. For example, $\fF$ could be the class of finite totally ordered sets, or finite graphs; see \cite{Macpherson} for general background. Given embeddings $i \colon Y \to X$ and $j \colon Y \to Y'$ of structures in $\fF$, an \defn{amalgamation} is a structure $X'$ in $\fF$ equipped with embeddings $i' \colon Y' \to X'$ and $j' \colon X \to X'$ that agree on $Y$ and are jointly surjective. Amalgamations figure prominently in the theory of Fra\"iss\'e limits.

A \defn{measure} on the class $\fF$ valued in a commutative ring $k$ is a rule $\mu$ that assigns to each embedding $i \colon Y \to X$ in $\fF$ a value $\mu(i)$ in $k$ such that the following three conditions hold:
\begin{enumerate}
\item We have $\mu(i)=1$ if $i$ is an isomorphism.
\item We have $\mu(j \circ i) = \mu(j) \cdot \mu(i)$ when defined.
\item Suppose $i \colon Y \to X$ and $j \colon Y \to Y'$ are embeddings, and let $(X'_{\alpha}, i'_{\alpha}, j'_{\alpha})$, for $1 \le \alpha \le n$, be the various amalgamations (up to isomorphism)\footnote{We assume that there are always finitely many amalgamations (up to isomorphism).}. Then $\mu(i)=\sum_{\alpha=1}^n \mu(i'_{\alpha})$.
\end{enumerate}
There is a universal measure valued in a ring $\Theta(\fF)$. To define $\Theta(\fF)$, start with the polynomial ring in formal symbols $[i]$, for embeddings $i$ in $\fF$, and quotient by the ideal generated by relations corresponding to the above axioms. Giving a $k$-valued measure on $\fF$ is then equivalent to giving a ring homomorphism $\Theta(\fF) \to k$. Thus computing the ring $\Theta(\fF)$ is an important problem, if one cares about measures.

The notion of measure originated in our joint work with Harman \cite{repst}, though similar notions have also been considered \cite{AMSW,KS,Wolf}. We showed how one can construct a tensor category from a measure, and used this observation to produce some fundamentally new examples of tensor categories. Due to this application, we are very interested in measures. However, constructing and classifying measures seems to be a difficult problem in general. In this note, we solve this problem for a specific class $\fF$, namely, the class of permutations.

\subsection{Permutations}

A \defn{permutation} is a finite set equipped with two total orders, which we denote by $<$ and $\prec$. Permutations can be conveniently denoted by a sequence of distinct numbers; the first order is the usual one, while the second comes from the order in the sequence. For instance, 3142 denotes the permutation with elements $\{1,2,3,4\}$ and orders
\begin{displaymath}
1 < 2 < 3 < 4, \qquad 3 \prec 1 \prec 4 \prec 2.
\end{displaymath}
Every permutation is isomorphic to a unique \defn{normalized} permutation, i.e., one using each of the numbers $1, \ldots, n$, for some $n$. The study of permutations has a long history, going back to classical works such as \cite{ES, Knuth, Schensted, SS}, and continues to be an active field today; see \cite{Vatter} for an overview.

\subsection{The main result}

The main theorem of this paper determines the measures on the class $\fP$ of permutations:

\begin{theorem}
There exist thirty-seven $\bZ$-valued measures on $\fP$ (see Appendix~\ref{s:meas}). Together, they define a ring isomorphism $\Theta(\fP) \cong \bZ^{37}$.
\end{theorem}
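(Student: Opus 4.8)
The plan is to follow Nekrasov's method, as announced in the abstract, which I expect to work as follows. One first shows that the ring $\Theta(\fP)$ is generated by the classes $[i]$ attached to a small family of ``atomic'' embeddings: those in which $X$ has one more point than $Y$. Indeed, axiom~(2) lets one factor any embedding into a chain of one-point extensions, so $\Theta(\fP)$ is generated by the values $\mu(i)$ for $i$ a one-point extension of a normalized permutation $w$ of size $n$ by a point inserted in some position. The key structural input is that such a value depends only on bounded ``local'' data around the insertion site: the relevant combinatorial datum is a pair of positions (where the new element sits in the order $<$ and where it sits in the order $\prec$) relative to the finitely many neighbours that the amalgamation axiom can ``see.'' This is the step I expect to be the main obstacle: proving a precise finiteness/locality statement that reduces the a priori infinite set of generators to finitely many parameters, and then showing that the amalgamation relations (axiom~(3)) among these generators cut the parameter space down to something manageable. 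Nekrasov's contribution is presumably exactly a clean way to organize this reduction.

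Concretely, I would proceed in the following steps. First, introduce for each permutation $w$ and each way of inserting a new minimal-or-generic point a generating symbol, and use axioms~(1) and~(2) to present $\Theta(\fP)$ on these symbols. Second, work out explicitly the amalgamation diagrams: given $i\colon Y\to X$ (a one-point extension) and $j\colon Y\to Y'$ (another one-point extension at a possibly different site), enumerate the permutations $X'$ of size $|Y|+2$ receiving the two embeddings compatibly and jointly surjectively; this is a finite, purely combinatorial enumeration of how two inserted points can be interleaved in the two orders. Third, extract from axiom~(3) the resulting quadratic relations among the generators, and from axiom~(2) applied to two-point extensions in two different orders a commutativity/compatibility constraint. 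Fourth, solve the resulting system: one expects the generators to organize into finitely many ``numerical invariants'' (e.g. quantities measuring how $\mu$ behaves when inserting a point at the very bottom in $<$, at the very top, adjacent to an existing point in both orders, etc.), and the relations to force each such invariant to satisfy a polynomial like $x^2=x$ or $x(x-1)=0$ or a small linear recursion, so that the spectrum of $\Theta(\fP)$ is finite.

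Finally, having shown $\Theta(\fP)$ is a finite-rank $\bZ$-algebra whose prime ideals are explicitly enumerable, I would exhibit the $37$ measures of Appendix~\ref{s:meas} directly (each is a specific rule; one checks axioms~(1)--(3) by hand for each), verify that the $37$ induced ring maps $\Theta(\fP)\to\bZ$ are pairwise distinct and that their product $\Theta(\fP)\to\bZ^{37}$ is surjective (by producing, for each coordinate, an element of $\Theta(\fP)$ that is $1$ there and $0$ elsewhere, i.e.\ an idempotent decomposition), and then conclude injectivity from the presentation: every element of $\Theta(\fP)$ lies in the subring generated by the finitely many idempotents, because the defining relations worked out above already exhibit $\Theta(\fP)$ as a product of $37$ copies of $\bZ$. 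The surjectivity of $\Theta(\fP)\to\bZ^{37}$ together with the upper bound ``$\Theta(\fP)$ has at most $37$ points'' coming from the relation-solving step gives the isomorphism. The genuine mathematical work is concentrated in the locality reduction and the relation analysis; everything after that is bookkeeping, albeit extensive.
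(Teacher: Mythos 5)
Your outline does follow the same overall route as the paper (Nekrasov's method: reduce to one-point extensions, localize, extract linear and quadratic relations from amalgamation and composition, then solve), but as written it is a plan rather than a proof, and the two steps you yourself flag as the hard ones are exactly where the content lies. The locality reduction is made precise via the notion of \emph{separated} points: $x$ and $y$ are separated when $X$ is the unique amalgamation of $X\setminus x$ and $X\setminus y$ over $X\setminus\{x,y\}$, in which case $[X,x]=[X\setminus y,x]$ in $\Theta(\fP)$. For permutations one checks that $x,y$ are separated iff they are not adjacent in either order; since an element has at most four neighbours, every marked permutation reduces to a \emph{minimal} one with at most five elements, and there are exactly $87$ of these. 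Equally important, and absent from your sketch, is a bound guaranteeing that only finitely many relations need to be imposed: Nekrasov's refined presentation shows it suffices to take L-data on at most $6$ points and Q-data on at most $8$ points (the latter because in a minimal Q-datum every point must neighbour one of the two marked points). Without such a bound you only ever verify a subsystem of the relations, which gives an upper bound on the set of measures but does not let you conclude that your $37$ candidate homomorphisms actually extend to measures.

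The second gap is your expectation that the relations ``force each invariant to satisfy a polynomial like $x^2=x$.'' Nothing of the sort happens: after the linear relations one is left with a polynomial ring in $8$ variables modulo $90$ quadratic relations, and determining that the quotient is $\bZ^{37}$ is a genuine computation (done in the paper with Macaulay2). There is no a priori reason the quotient is reduced or torsion-free, and indeed the integral statement requires care: over $\bZ$ the obvious Gr\"obner-basis argument only bounds the number of module generators by $47$, and leading coefficients equal to $2$ appear, so one must separately check flatness away from $2$ and recount dimensions modulo $2$ before concluding freeness of rank $37$. Your proposed endgame (exhibit $37$ measures, produce idempotents, deduce surjectivity) is fine in spirit and is essentially how the paper finishes --- the $37$ integer points remain distinct modulo every prime, so the product map is an isomorphism --- but it presupposes the freeness statement that the computation has to supply. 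In short: right strategy, but the finiteness bounds and the solution of the resulting system are the theorem, and they are missing.
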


We note that all of the measures take values in $\{-1,0,1\}$. To prove the theorem, we apply a method of Nekrasov \cite{Nekrasov} (reviewed in \S \ref{s:bg}) that reduces the problem to a finite computation. We work through this computation, with the aid of a computer in places.

\begin{remark}
Every measure takes the value~0 on some embedding. This means that there are no regular measures on $\fP$, in the terminology of \cite{repst}. The non-existence of regular measures had previously been proved by Harman.
\end{remark}

\begin{remark}
As stated, the proof of the theorem involves some computer calculations. Our final answer is consistent with a number of properties that we can prove theoretically. This makes us reasonably confident that our answer is correct.
\end{remark}

\subsection{Applications to tensor categories}

Pre-Tannakian categories are perhaps the nicest symmetric tensor categories. It is quite difficult to construct new examples, and this is a major problem in the subject; one of the main points of \cite{repst} is that measures sometimes lead to new examples. Each measure $\mu$ on $\fP$ yields a tensor category $\uPerm(\fP; \mu)$, as discussed in \S \ref{s:tensor}, which is not pre-Tannakian. An important problem is to determine if these categories have abelian envelopes (which, by definition, are pre-Tannakian). We do not have any ideas about this at the moment.

While our measures on $\fP$ do not directly yield any new pre-Tannakian categories, two of these measures do yield a new pre-Tannakian category via a process called semi-simplification. To state this result, we prefer to use the language of oligomorphic groups. Let $G=\Aut(\bQ,<)$ be the group of automorphisms of the orderd set $(\bQ,<)$. In \cite{repst}, we showed that $G$ carries a unique regular measure $\mu$, and that the category $\uRep_k(G; \mu)$ defined in \cite[Part~III]{repst} is semi-simple and  pre-Tannakian (for any field $k$). This category is now called the \defn{Delannoy category}. It was studied in depth in \cite{line}, where it was found to have a number of remarkable properties; see \cite{circle} for an analogous case.

We obtain the following result:

\begin{corollary} \label{cor:wreath}
The wreath product $G \wr G$ admits a regular measure $\mu$. For any field $k$, the category $\uRep_k(G \wr G; \mu)$ is a semi-simple pre-Tannakian category.
\end{corollary}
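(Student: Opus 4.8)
The strategy is to build the regular measure on $G \wr G$ out of two ingredients — the Delannoy measure on $G$ and one of the thirty-seven measures on $\fP$ — and then to recognize the tensor category it produces as a semi-simplification. First I would fix the imprimitive action of $G \wr G$ on $\bQ \times \bQ$, under which it is the full automorphism group of $\bQ \times \bQ$ equipped with its lexicographic order together with the ``same first coordinate'' equivalence relation; in particular $G \wr G$ is oligomorphic, and an orbit on $n$-tuples is the datum of a convex equivalence relation on an $n$-element set together with a linear order refining it. This is where the classification enters: a convexly ordered equivalence relation is the same thing as a pair of total orders that agree across classes and are reversed within each class, so configurations for $G \wr G$ form a special class of permutations, and amalgamating them is governed block-by-block by the permutation combinatorics behind Appendix~\ref{s:meas}. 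Throughout I would use the formalism of \cite{repst}: it attaches to a measure on an oligomorphic group a tensor category, which is pre-Tannakian when the measure is regular, and it equips $G$ with its unique regular (Delannoy) measure, for which $\uRep_k(G;\mu)$ is semisimple.

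Next I would write an explicit candidate measure $\mu$ on $G \wr G$, whose value on an inclusion of orbits is a product of a ``between-blocks'' factor evaluated by one of the two distinguished measures on $\fP$ and a ``within-blocks'' factor evaluated by the Delannoy measure on $G$. The unit and multiplicativity axioms are immediate from multiplicativity of the formula; the amalgamation axiom is the substantive point, and it requires decomposing an amalgamation of convexly ordered equivalence relations into its quotient-level and block-level contributions and then invoking the amalgamation axiom for the chosen permutation measure and for the Delannoy measure separately. I would then check that $\mu$ is regular, i.e.\ nonzero on the relevant configuration, using that the Delannoy measure and the chosen permutation measure take value $\pm 1$ there; note that the remark above — no measure on $\fP$ is regular — is not an obstacle, since regularity is now being asked of $\mu$ on $G \wr G$ rather than of its permutation-measure ingredient.

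Granting that $\mu$ is a regular measure, $\uRep_k(G \wr G;\mu)$ is pre-Tannakian for every field $k$ by the general theory of \cite{repst}. For semisimplicity I would identify this category with the semi-simplification of $\uPerm(\fP;\mu_0)$, for the corresponding measure $\mu_0$ on $\fP$ (one of the two measures responsible for a new pre-Tannakian category), by matching tensor generators, their Hecke-type endomorphism algebras, and hence the simple objects on the two sides; equivalently, one shows directly that each such endomorphism algebra in $\uRep_k(G \wr G;\mu)$ is semisimple by reducing, via the block decomposition, to semisimplicity of the analogous algebras in the Delannoy category. I expect the main obstacle to be the amalgamation combinatorics of convexly ordered equivalence relations and its translation into statements about the permutation measures of Appendix~\ref{s:meas} — the precise point at which the thirty-seven-measure count is used — together with pinning down the negligible ideal needed to identify the semi-simplification.
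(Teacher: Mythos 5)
Your identification of the $G \wr G$-configurations with the permutation class $\fP_5$ (equivalently $\fP_6$) --- convexly ordered equivalence relations, i.e.\ permutations avoiding 231 and 312 --- is exactly the bridge the paper uses: the automorphism group of the Fra\"iss\'e limit of $\fP_5$ is $G \wr G$. But from there the paper's argument is much shorter than yours, and each of your two main steps has a problem. You do not need to construct the measure by a product formula and re-verify the amalgamation axiom: measure 36 (resp.\ 37) of Appendix~\ref{s:meas} has support exactly $\fP_5$ (resp.\ $\fP_6$), and by the general fact recalled in \S\ref{s:supp} the restriction of a measure to its support is automatically a regular measure on that Fra\"iss\'e subclass; that already yields the regular measure on $G \wr G$. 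Moreover your product formula is garbled as stated: the between-blocks data of a convexly ordered equivalence relation is a single total order (the quotient order on blocks), so the natural factor there would again be the Delannoy measure on $G$, not ``one of the two distinguished measures on $\fP$'' --- those two measures are the \emph{output} of the whole discussion (their restrictions to $\fP_5$, $\fP_6$ are the measures on $G \wr G$), not an ingredient, and feeding one of them in as a factor is circular.

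More seriously, your route to semisimplicity runs backwards relative to the logical structure of \S\ref{s:tensor}. Proposition~\ref{prop:ss2} takes the semisimplicity of $\uPerm_k(\fP';\mu')^{\rm kar}$ as a \emph{hypothesis} and only then concludes that nilpotents have trace zero and that the semi-simplification of $\uPerm_k(\fP;\mu)^{\rm kar}$ is $\uPerm_k(\fP';\mu')^{\rm kar}$. You therefore cannot first form the semi-simplification of $\uPerm_k(\fP;\mu_0)^{\rm kar}$ and use it to deduce semisimplicity of $\uRep_k(G\wr G;\mu)$: without the semisimplicity of the target one does not yet know that the negligible morphisms form a tensor ideal, so the object you propose to identify with is not yet available. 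The paper instead proves semisimplicity directly via the criterion of Proposition~\ref{prop:ss1} applied to the class $\fP_5$ with its regular measure: the measure is $\bZ$-valued (so condition (a) holds with $N=1$), permutations have trivial automorphisms (so (b) holds), and the extension property (c) is an easy check for $\fP_5$. Your alternative suggestion of analyzing endomorphism algebras block-by-block and reducing to the Delannoy category is not obviously wrong, but it is a substantial unproved claim, and nothing like it is needed.
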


It seems quite likely that there might be a good theory of measures on wreath products of oligomorphic groups in general; however, this has not been developed yet.

\subsection{Open problems}

(a) If $\mu$ is one of our 37 measures then $\mu$ assigns a value in $\{-1,0,1\}$ to each inclusion of permutations $Y \subset X$. Is there a meaningful combinatorial interpretation of these values?

(b) Let $\fF_n$ be the class of finite sets endowed with $n$ total orders. We can show $\Theta(\fF_n) \cong \bZ^{a(n)}$ for some $a(n) \in \bN$. In \cite{repst}, we found $a(1)=4$, and in this paper we find $a(2)=37$. A natural problem is to determine $a(n)$ in general. Nekrasov's method applies in principal to each $\fF_n$, and reduces the problem to a finite computation. However, for $\fF_3$ there are 1,999,581 minimal marked structures (compared to the 87 we have for $\fF_2$), so this may already be out of reach of brute force calculation.

(c) A \defn{permutation class} is a class of permutations that is hereditary (see \S \ref{ss:fraisse}). Such classes have been widely studied. In this paper, we classify measures on the class of all permutations. Are there any interesting measures on other permutation classes?

(d) One can also consider ``circular permutations,'' that is, sets endowed with two cyclic orders. What are the measures for them?

\subsection*{Acknowledgments}

We thank Nate Harman, Sarah Kitchen, Ilia Nekrasov and Steven Sam for helpful discussions.

\section{Nekrasov's presentation} \label{s:bg}

In \S \ref{s:bg}, we review some material about measures on Fra\"iss\'e classes. In particular, we describe the presentation of $\Theta(\fF)$ due to Nekrasov. We refer to \cite{arboreal} for proofs.

\subsection{Fra\"iss\'e classes} \label{ss:fraisse}

Let $\fF$ be a class of finite relational structures. We say that $\fF$ is \defn{hereditary} if it is closed under isomorphism (i.e., $X \in \fF$ and $Y \cong X$ implies $Y \in \fF$) and passing to substructures (i.e., $X \in \fF$ and $Y \subset X$ implies $Y \in \fF$). Note that we allow the empty structure, so if a hereditary class is non-empty then it must contain the empty structure. We say that $\fF$ satisfies the \defn{amalgamation property} if whenever one has embeddings $Y \to X$ and $Y \to Y'$ in $\fF$, there is at least one amalgamation that belongs to $\fF$.

We say that $\fF$ is a \defn{Fra\"iss\'e class} if it satisfies the following conditions:
\begin{enumerate}
\item $\fF$ is non-empty.
\item $\fF$ is hereditary.
\item $\fF$ satisfies the amalgamation property.
\item $\fF$ has only finitely many isomorphism classes of $n$ element structures, for each $n$.
\end{enumerate}
Condition (d) is often relaxed to simply asking that $\fF$ has only countably many isomorphism classes of structures, but the above will be more convenient for us. Note that some sources also require $\fF$ to satisfy the joint embedding property, but this follows from the amalgamation property since we include the empty structure.

We assume in the remainder of \S \ref{s:bg} that $\fF$ is a Fra\"iss\'e class. While the definiton of measure makes sense more broadly, some results we present depend on the Fra\"iss\'e condition.

\subsection{A presentation}

The ring $\Theta(\fF)$ is, by definition, generated by classes $[i]$, where $i \colon Y \to X$ is an embedding of structures in $\fF$. We say that $i$ is a \defn{one-point extension} if $\# X=1+\#Y$. Clearly, every embedding factors into a sequence of one-point extensions, and so by \S \ref{ss:meas}(b) the ring $\Theta(\fF)$ is generated by the classes of such embeddings. It will be convenient to adopt a slightly different perspective. A \defn{marked structure} is a pair $(X,x)$ where $X$ is a member of $\fF$ and $x \in X$. We define $[X,x]$ to be the class in $\Theta(\fF)$ of the one-point extension $X \setminus x \to X$. Thus $\Theta(\fF)$ is generated by the classes of marked structures.

We now describe the relations between marked structures. To this end, let $P$ be the polynomial ring over $\bZ$ with variables corresponding to isomorphism classes of marked structures. For a marked structure $(X,x)$, we let $\lbb X,x \rbb$ denote the corresponding variable of $P$. We let $\phi \colon P \to \Theta(\fF)$ be the ring homomorphism given by $\phi(\lbb X,x \rbb) = [X,x]$.

An \defn{L-datum} is a triple $(X,a,b)$ where $X$ is a member of $\fF$ and $a$ and $b$ are distinct points of $X$. Let $X_1, \ldots, X_n$ be the proper amalgamations of $X \setminus a$ and $X \setminus b$ over $X \setminus \{a,b\}$, where ``proper'' means that $a$ and $b$ are distinct in the amalgamation. Define $\delta=1$ if there is an improper amalgamation, and $\delta=0$ otherwise; note that $\delta=1$ if any only if there is an isomorphism $X \setminus b \to X \setminus a$ that is the identity away from $a$ and $b$. We define
\begin{displaymath}
L(X,a,b) = \lbb X \setminus b,a \rbb - \delta - \sum_{i=1}^n \lbb X_i, a \rbb.
\end{displaymath}
This element belongs to the kernel of $\phi$ by \S \ref{ss:meas}(c). We let $\fb_1$ be the ideal of $P$ generated by $L(X,a,b)$ over all choices of L-data. Note that $a$ and $b$ are not symmetrical in $L$, i.e., $L(X,a,b)$ and $L(X,b,a)$ may not be equal.

A \defn{Q-datum} is a triple $(X,a,b)$, where again $X$ is a member of $\fF$ and $a$ and $b$ are distinct points of $X$. Given such a datum, we define
\begin{displaymath}
Q(X,a,b) = \lbb X, a \rbb \cdot \lbb X \setminus a, b \rbb - \lbb X, b \rbb \cdot \lbb X \setminus b, a \rbb.
\end{displaymath}
This belongs to the kernel of $\phi$ by \S \ref{ss:meas}(b). We let $\fb_2$ be the ideal of $P$ generated by $Q(X,a,b)$ over all choices of Q-data. Note that $a$ and $b$ are symmetric in $Q$, i.e., $Q(X,a,b)=Q(X,b,a)$.

Let $\fb=\fb_1+\fb_2$. The following is an initial presentation of $\Theta$:

\begin{proposition}
The map $\phi$ induces an isomorphism $P/\fb \cong \Theta(\fF)$.
\end{proposition}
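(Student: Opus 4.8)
The plan is to construct an explicit inverse to the homomorphism $\bar\phi \colon P/\fb \to \Theta(\fF)$ induced by $\phi$, exploiting the correspondence between ring homomorphisms out of $\Theta(\fF)$ and measures on $\fF$. First, $\bar\phi$ is well-defined: the generators $L(X,a,b)$ of $\fb_1$ and $Q(X,a,b)$ of $\fb_2$ lie in $\ker\phi$ by axioms (c) and (b) of \S\ref{ss:meas}, as already observed, so $\phi(\fb)=0$. It is surjective because every embedding of structures in $\fF$ is a composition of one-point extensions, whence $\Theta(\fF)$ is generated as a ring by the classes $[X,x]$, all of which lie in its image. It therefore suffices to produce a ring homomorphism $\psi \colon \Theta(\fF) \to P/\fb$ with $\psi \circ \bar\phi = \mathrm{id}$: since $\bar\phi$ is onto, such a $\psi$ is automatically a two-sided inverse, and the proposition follows.

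By the universal property of $\Theta(\fF)$, giving $\psi$ is the same as giving a $(P/\fb)$-valued measure $\mu$ on $\fF$. I would set $\mu$ on a one-point extension $X \setminus x \hookrightarrow X$ equal to the image of $\lbb X,x\rbb$ in $P/\fb$, and on a general embedding $i \colon Y \hookrightarrow X$ equal to $\prod_{j=1}^m \mu(Z_{j-1} \subset Z_j)$ for a chosen chain $Y = Z_0 \subset Z_1 \subset \cdots \subset Z_m = X$ of one-point extensions. The first point to check is independence of the chain. The substructures $Z$ with $Y \subseteq Z \subseteq X$ form a Boolean lattice, a substructure being determined by its underlying set; its maximal chains correspond to linear orders on $X \setminus Y$, and any two such orders are joined by adjacent transpositions. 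An adjacent transposition alters the product only by replacing two consecutive factors $\lbb W, a \rbb \cdot \lbb W \setminus a, b \rbb$ with $\lbb W, b \rbb \cdot \lbb W \setminus b, a \rbb$, for a common intermediate substructure $W$, and these agree modulo $\fb_2$ by $Q(W,a,b)$. So $\mu$ is well-defined, and measure axioms (a) and (b) are then immediate: an isomorphism is an empty composition, and chains concatenate.

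The substance is axiom (c), which I would establish in three stages. The base case is that in which both $i \colon Y \to X$ and $j \colon Y \to Y'$ are one-point extensions; unwinding the definitions, the amalgamations of $X$ and $Y'$ over $Y$ are precisely the proper amalgamations $X_1, \dots, X_n$ of the associated L-datum, together with one improper amalgamation, present exactly when $\delta = 1$ and contributing $\mu(\mathrm{id}) = 1$. Hence the desired identity $\mu(i) = \sum_\alpha \mu(i'_\alpha)$ is precisely the relation $L(X,a,b) = 0$ in $P/\fb$. From this one bootstraps, first to the case where only $j$ is a one-point extension, by induction on $\#(X \setminus Y)$, and then to arbitrary $i$ and $j$, by induction on $\#(Y' \setminus Y)$. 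Both inductions rest on the fact that amalgamations compose in stages: if $Y \to Y'$ factors as $Y \to Y'' \to Y'$, then every amalgamation of $X$ and $Y'$ over $Y$ arises, uniquely up to isomorphism, as an amalgamation over $Y''$ of $Y'$ with an amalgamation of $X$ and $Y''$ over $Y$; combining this with the multiplicativity of $\mu$ and the inductive hypothesis yields axiom (c) in general. Then $\mu$ is a measure, the induced $\psi$ fixes every generator $\lbb X,x\rbb$ and hence is the desired inverse.

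I expect the main obstacle to be this last stage: verifying that the ``amalgamations in stages'' decomposition really is a bijection on isomorphism classes, and carefully threading the improper-amalgamation term $\delta$ through both inductions, since $a$ and $b$ enter the datum $L$ asymmetrically. By comparison, the well-definedness of $\mu$ (via $\fb_2$) and the base case of axiom (c) (via $\fb_1$) are essentially bookkeeping once the staging lemma is available.
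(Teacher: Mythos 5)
Your strategy --- inverting the induced map $\bar\phi$ by exhibiting the universal $(P/\fb)$-valued measure, with the $Q$-relations giving independence of the chosen chain, the $L$-relations handling the case of two one-point extensions, and a staging induction giving axiom (c) in general --- is the right one; the paper offers no argument of its own beyond citing \cite[Proposition~2.7]{arboreal}, and your outline is essentially the expected proof. The points you flag as the main obstacle are in fact fine: heredity of $\fF$ places the intermediate substructure $j'(X)\cup i'(Y'')$ of an amalgamation inside $\fF$, and the staging map is a bijection on isomorphism classes of amalgamation diagrams because any automorphism of such a diagram is the identity (the two images are jointly surjective and each is fixed as a map), so the double sums in your two inductions really do regroup as claimed. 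One small precision: your first induction (on $\#(X\setminus Y)$, with $j$ a one-point extension) uses the staging lemma with the roles of $X$ and $Y'$ exchanged, i.e.\ staging through $Y\subset W\subset X$; this holds by the same argument, since the amalgamation data are symmetric in the two structures being amalgamated.

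The genuine gap is in your base case, and it is not mere bookkeeping. You assert that when $i$ and $j$ are both one-point extensions the instance of axiom (c) ``is precisely the relation $L(X,a,b)=0$.'' But an L-datum as defined in this paper requires an ambient structure $X\in\fF$ containing both points, i.e.\ it presupposes that the two one-point extensions admit at least one proper amalgamation. If they admit only the improper one, the identity you must verify is $\mu(i)=1$, that is $\lbb X,a\rbb=1$ for the relevant marked structure, and no L-datum yields it; in general it is not even a consequence of $\fb$. Concretely, in the Fra\"iss\'e class of partial matchings, two new vertices matched to the same old vertex admit only the improper amalgamation, and the assignment $\lbb X,x\rbb=t$ for $x$ matched and $\lbb X,x\rbb=c_s$ for $x$ unmatched (where $s$ is the number of edges plus isolated vertices and $c_{s+1}=c_s-1-t$) satisfies every L- and Q-relation of this paper, while $\Theta$ forces $t=1$; so with the literal L-data here your measure $\mu$ cannot satisfy axiom (c) for that class, and the base case genuinely breaks. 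The repair is either to add the relations attached to pairs of one-point extensions admitting no proper amalgamation (presumably the ``slight difference'' with the L-data of \cite{arboreal} alluded to in the Remark), or to impose the hypothesis that any two one-point extensions of a common structure admit a proper amalgamation --- true for $\fP$, since the two new points can always be inserted at distinct positions consistently in both orders --- under which your base case, and with it the whole argument, goes through.
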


\begin{proof}
See \cite[Proposition~2.7]{arboreal}.
\end{proof}

\begin{remark}
The definition of L-datum here differs slightly from the one in \cite[\S 2.4]{arboreal}, but they give rise to the same linear equations and the same $\fb_1$ ideal. We have adopted the present definition as it leads to a concise notation for L-data.
\end{remark}

\subsection{A refined presentation}

Let $X$ be a structure in $\fF$ and let $x,y \in X$ be distinct elements. We say that $x$ and $y$ are \defn{separated} if $X$ is the unique amalgamation in $\fF$ of $X \setminus x$ and $X \setminus y$ over $X \setminus \{x,y\}$. In this case, \S \ref{ss:meas}(c) implies that we have an equality
\begin{displaymath}
[X, x] = [X \setminus y, x]
\end{displaymath}
in $\Theta(\fF)$. This motivates the following definition: we say that a marked structure $(X,x)$ is \defn{minimal} if there is no point in $X$ that is separated from the marked point $x$. The above identity shows that the class of any marked structure is equal to the class of a minimal marked structure, and so $\Theta(\fF)$ is generated by the classes of minimal marked structures. This was an important observation of Nekrasov \cite{Nekrasov}.

We now aim to describe the relations among this new generating set. Let $P^* \subset P$ be the polynomial ring on the variables corresponding to minimal marked structures, and let $\phi^* \colon P^* \to \Theta(\fF)$ be the restriction of $\phi$ to $P^*$. Thus our goal is to give generators for $\ker(\phi^*)$.

A \defn{modification} of a marked structure $(X,x)$ is a marked structure $(X',x)$ obtained by adding or removing a single vertex that is separated from $x$. We say that two marked structures are \defn{equivalent} if one can be obtained from the other by a chain of modifications (or isomorphisms). We let $\fb_0^*$ be the ideal of $P^*$ generated by the elements $\lbb X,x \rbb-\lbb X',x' \rbb$ whenever $(X,x)$ and $(X',x')$ are equivalent minimal marked structures. Clearly, $\fb_0^*$ belongs to the kernel of $\phi$. Note that there is a well-defined map $\pi \colon P \to P^*/\fb_0^*$ that takes $\lbb X,x \rbb$ to $\lbb X',x' \rbb$, where $(X',x')$ is any minimal marked structure equivalent to $(X,x)$.

We say that an L-datum $(X,a,b)$ is \defn{minimal} if $a$ and $b$ are not separated in $X$, and there is no point of $X \setminus b$ that is stably separated from $a$. Here we say that two points $x,y \in X$ are \defn{stably separated} if for any embedding $i \colon X \to X'$ in $\fF$ the points $i(x)$ and $i(y)$ are separated. We define $\fb_1^*$ to be the inverse image in $P^*$ of the ideal of $P^*/\fb_0^*$ generated by the elements $\pi(L(X,a,b))$, where $(X,a,b)$ is a minimal L-datum.

We say that a Q-datum $(X,a,b)$ is \defn{minimal} if the following conditions hold:
\begin{enumerate}
\item It is not symmetric, i.e., there is no automorphism of $X$ switching $a$ and $b$ and fixing all other points.
\item $a$ and $b$ are not separated.
\item There is no $c \in X \setminus \{a,b\}$ such that $c$ is separated from the marked vertex in each of $(X,a)$, $(X,b)$, $(X \setminus b, a)$, and $(X \setminus a, b)$.
\end{enumerate}
We let $\fb_2^*$ be the inverse image in $P^*$ of the ideal of $P^*/\fb_0^*$ generated by the elements $\pi(Q(X,a,b))$, where $(X,a,b)$ is a minimal Q-datum.

Put $\fb^*=\fb_1^*+\fb_2^*$. The following is our refined presentation of $\Theta$:

\begin{proposition}
The map $\phi^*$ induces an isomorphism $P^*/\fb^* \to \Theta(\fF)$.
\end{proposition}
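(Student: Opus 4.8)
The plan is to descend the presentation $P/\fb \cong \Theta(\fF)$ of the preceding proposition along the inclusion $P^* \subseteq P$, transporting arbitrary relations into $P^*$ by means of the map $\pi$. Surjectivity of $\phi^*$ is immediate: every marked structure $(X,x)$ is equivalent, through finitely many modifications, to a minimal one $(X',x')$, and each modification is an equality $[X,x] = [X \setminus y, x]$ in $\Theta(\fF)$ by \S \ref{ss:meas}(c), so $[X,x] = [X',x']$ lies in the image of $\phi^*$, and these classes generate $\Theta(\fF)$. For the inclusion $\fb^* \subseteq \ker \phi^*$, note that $\fb^*$ is generated by $\fb_0^*$ together with lifts of the classes $\pi(L(X,a,b))$ and $\pi(Q(X,a,b))$ for minimal L- and Q-data: the generators of $\fb_0^*$ go to differences $[X,x] - [X',x'] = 0$, and, since passing to an equivalent structure does not change a variable's image in $\Theta(\fF)$, the image under $\phi^*$ of a lift of $\pi(L(X,a,b))$ equals $\phi(L(X,a,b))$, which vanishes by \S \ref{ss:meas}(b),(c), and likewise for $Q$. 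Hence $\phi^*$ induces a surjection $P^*/\fb^* \to \Theta(\fF)$, and the whole content of the proposition is its injectivity, i.e., that $\ker \phi^* \subseteq \fb^*$.

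So let $f \in \ker \phi^* \subseteq P^* \subseteq P$. Since $\ker \phi = \fb$, we may write $f = \sum_k g_k\, L(X_k,a_k,b_k) + \sum_\ell h_\ell\, Q(X_\ell, a_\ell, b_\ell)$ with $g_k, h_\ell \in P$ and the L- and Q-data arbitrary. Applying the ring homomorphism $\pi \colon P \to P^*/\fb_0^*$, and using that $\pi$ restricts to the quotient map on $P^*$ (so that $\pi(f)$ is just the image of $f$), we see that $f \in \fb^*$ will follow from a \emph{reduction lemma}: for every L-datum $(X,a,b)$ the class $\pi(L(X,a,b))$ lies in the ideal of $P^*/\fb_0^*$ generated by the $\pi(L)$ of minimal L-data (whose preimage in $P^*$ is, by definition, $\fb_1^*$), and likewise $\pi(Q(X,a,b))$ lies in the ideal generated by the $\pi(Q)$ of minimal Q-data. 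Granting the lemma, $\pi(f)$ lies in the image of $\fb^*$ in $P^*/\fb_0^*$, and since $f \in P^*$ and $\fb_0^* \subseteq \fb^*$ this yields $f \in \fb^*$.

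I would prove the reduction lemma by induction on $\# X$, with nothing to do when the datum is already minimal. For a non-minimal L-datum $(X,a,b)$ there are two cases. If $a$ and $b$ are separated, then $X$ is the unique amalgamation of $X \setminus a$ and $X \setminus b$ over $X \setminus \{a,b\}$ and there is no improper one, so $L(X,a,b) = \lbb X \setminus b, a \rbb - \lbb X, a \rbb$; as $b$ is separated from $a$ in $X$, the structures $(X \setminus b, a)$ and $(X,a)$ are equivalent, so $\pi(L(X,a,b)) = 0$. Otherwise some $c \in X \setminus b$ is stably separated from $a$; then $c$ is separated from $a$ in $X \setminus b$ and in every proper amalgamation $X_i$ of $X \setminus a$ and $X \setminus b$ (each of which contains a copy of $X \setminus b$), so $\pi$ carries $\lbb X \setminus b, a \rbb$ to $\lbb X \setminus \{b,c\}, a \rbb$ and each $\lbb X_i, a \rbb$ to $\lbb X_i \setminus c, a \rbb$; provided one checks that $X_i \mapsto X_i \setminus c$ is a bijection onto the proper amalgamations of $X \setminus \{a,c\}$ and $X \setminus \{b,c\}$ over $X \setminus \{a,b,c\}$ and that the terms $\delta$ for $(X,a,b)$ and $(X \setminus c, a, b)$ agree, one gets $\pi(L(X,a,b)) = \pi(L(X \setminus c, a, b))$ and finishes by induction. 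For a non-minimal Q-datum $(X,a,b)$ there are three sub-cases: if it is symmetric, the two monomials of $Q(X,a,b)$ coincide as variables of $P$ via the automorphism exchanging $a$ and $b$, so $Q(X,a,b) = 0$ already in $P$; if $a$ and $b$ are separated, then $\pi(\lbb X \setminus b, a \rbb) = \pi(\lbb X, a \rbb)$ and $\pi(\lbb X \setminus a, b \rbb) = \pi(\lbb X, b \rbb)$, whence $\pi(Q(X,a,b)) = 0$; and if some $c$ is separated from the marked vertex in each of $(X,a)$, $(X,b)$, $(X \setminus b, a)$, $(X \setminus a, b)$, then deleting $c$ from all four structures gives $\pi(Q(X,a,b)) = \pi(Q(X \setminus c, a, b))$, and we conclude by induction.

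The genuinely delicate step — and the one I expect to be the main obstacle — is the amalgamation-matching in the L-case: that deleting the stably separated point $c$ sets up a bijection between the proper amalgamations over $X \setminus \{a,b\}$ and those over $X \setminus \{a,b,c\}$, with matching $\delta$ terms. Surjectivity of this map is the heart of the matter — given an amalgamation over the smaller base one must reconstruct a compatible amalgamation over the larger one — and this is exactly what the stability in "stably separated" is designed to supply: along any extension, $c$ stays separated from $a$, so the deleted point can always be reinserted consistently. The remaining verifications are routine manipulations with modifications and amalgamations.
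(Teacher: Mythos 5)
The paper does not actually prove this proposition; it defers entirely to \cite[Theorem~2.16]{arboreal}, so there is no in-paper argument to compare against. Your overall architecture --- surjectivity from the equivalence $[X,x]=[X',x']$, the inclusion $\fb^*\subseteq\ker\phi^*$, and then injectivity via a reduction lemma transporting arbitrary L- and Q-relations to minimal ones through $\pi$ by induction on $\#X$ --- is the natural one and is almost certainly the shape of the cited proof. The bookkeeping is also right: $\ker\phi^*=\fb\cap P^*$, $\pi$ restricts to the quotient map on $P^*$, and $\pi^{-1}(I_1+I_2)\cap P^*=\fb_1^*+\fb_2^*$ because both ideals contain $\fb_0^*$. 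The Q-reduction and the first L-case ($a,b$ separated) are complete as written.

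The gap is exactly where you place it, and it is a real gap rather than a routine verification: for a non-minimal L-datum with $c\in X\setminus b$ stably separated from $a$, you assert without proof that $X'\mapsto X'\setminus c$ is a bijection from proper amalgamations over $X\setminus\{a,b\}$ to those over $X\setminus\{a,b,c\}$, with matching $\delta$. For a general relational class this needs work. Amalgamations of $X\setminus a$ and $X\setminus b$ can differ on any relation tuple containing both $a$ and $b$, including tuples that also contain $c$; so injectivity of deletion is not formal (it can be rescued: since $a$ and $c$ are separated in each $X'$, and all the $X'$ share the same restrictions to $X'\setminus a$ and $X'\setminus c$, uniqueness of the amalgamation over $X'\setminus\{a,c\}$ pins down $X'$ --- but you must say this). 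Surjectivity is harder: given an amalgamation $W$ over the smaller base, the obvious move of amalgamating $W$ with $X\setminus b$ over $X\setminus\{b,c\}$ produces a structure whose relations on tuples containing both $b$ and $c$ are a priori uncontrolled, so it need not restrict to $X\setminus a$; the heuristic ``stability lets you reinsert $c$'' does not by itself produce the required extension in $\fF$, nor does it address why exactly one preimage exists. Finally, the $\delta$-terms need not obviously agree: an isomorphism $X\setminus\{b,c\}\to X\setminus\{a,c\}$ fixing everything but $a,b$ need not extend over $c$ when there are relations of arity $\ge 3$ involving $a$ (resp.\ $b$) and $c$. Since this matching is the entire mathematical content of the proposition beyond formal manipulation, the proposal as written is an outline of the correct strategy rather than a proof; for the class $\fP$ of permutations (binary relations only) all of these points become easy, but the proposition is stated for a general Fra\"iss\'e class.
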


\begin{proof}
See \cite[Theorem~2.16]{arboreal}.
\end{proof}

\section{The main result} \label{s:main}

In \S \ref{s:main} we determine the ring $\Theta(\fP)$ for the class $\fP$ of permutations by applying the method of Nekrasov from \S \ref{s:bg}.

\subsection{Amalgamations} \label{ss:amalg}

Let $(X,<,\prec)$ be a permutation and let $x \ne y$ be elements of $X$. We say that $x$ and $y$ are \defn{$<$-adjacent} if they are adjacent in the $<$ order, meaning there is no $z$ such that $x<z<y$ or $y<z<x$. We similarly define \defn{$\prec$-adjacent}. We say that $x$ and $y$ are \defn{neighbors} if they are $<$-adjacent or $\prec$-adjacent (or both). For example, in the permutation 153264, the neighbors of 3 are 2, 4, and 5; indeed, 2 is $<$- and $\prec$-adjacent to 3, 4 is $<$-adjacent to 3, and 5 is $\prec$-adjacent to~3.

Consider a proper amalgamation $(X',<',\prec')$ of $X \setminus x$ and $X \setminus y$ over $X \setminus \{x,y\}$. We identify $X'$ with $X$ as a set. By definition, the orders $<'$ and $\prec'$ agree with $<$ and $\prec$ on all pairs of elements, except possibly $(x,y)$ and $(y,x)$; thus to determine amalgamations, we simply have to determine if and how one can modify the orders on these particular pairs. If $x$ and $y$ are not $<$-adjacent, then $<$ cannot be modified: for example, if there is some $z$ such that $x<z<y$ then we must have $x <' z$ and $z <' y$ and so $x <' y$ since $<'$ is transitive. If $x$ and $y$ are $<$-adjacent, then $<$ can be modified, that is, we can flip the relative order of $x$ and $y$. A similar analysis holds for $\prec$. Thus, we can summarize the situation as follows:
\begin{itemize}
\item If $x$ and $y$ are not neighbors then $X$ itself is the unique proper amalgamation.
\item If $x$ and $y$ are $<$-adjacent but not $\prec$-adjacent then there are two proper amalgamations: $X$, and the amalgamation obtained by inverting the $<$-order of $x$ and $y$.
\item If $x$ and $y$ are $\prec$-adjacent but not $<$-adjacent, then there are two proper amalgamations, similar to the previous case.
\item If $x$ and $y$ are both $<$-adjacent and $\prec$-adjacent, then there are four proper amalgamations: one can invert $x$ and $y$ in the $<$ order, the $\prec$ order, both, or neither.
\end{itemize}
In particular, we see that $x$ and $y$ are separated if and only if they are not neighbors. It follows from this description that ``separated'' and ``stably separated'' coincide for permutations. Finally, we note that there is an improper amalgamation (i.e., one in which $x$ and $y$ are identified) if and only if $x$ and $y$ are both $<$-adjacent and $\prec$-adjacent.

\subsection{Minimal marked permutations}

An element in a permutation has at most four neighbors. Thus, by the analysis above, a minimal marked permutation has at most five elements. We wrote a simple computer program to output all minimal marked permutations. There are 87 of them, listed in Figure~\ref{f:minimal}. We thus see that $P^*$ is the polynomial ring in variables $x_1, \ldots, x_{87}$.

We claim that there are no equivalences among minimal marked permutations (other than isomorphisms). To see this, define the \defn{neighborhood} of an element $x$ in a permutation $X$, denoted $N_x(X)$, to be the set consisting of $x$ and its neighbors, endowed with the induced relational structure; we regard it as marked by taking $x$ to be the marked point. Clearly, $N_x(X)$ is a minimal marked permutation. If $(X', x)$ is a modification of $(X,x)$, meaning $X'$ is obtained by adding or deleting an element that is separated from $x$, then $N_x(X)=N_x(X')$. This shows that $N_x(X)$ is the unique minimal marked permutation that is equivalent to $(X,x)$, from which the claim follows. As a corollary, we find $\fb_0^*=0$.

\begin{figure} \small\centering
\begin{tabular}{cc|cc|cc|cc|cc}
$x_1$  & \m{1}      & $x_{19}$ & 3\m{2}1  & $x_{37}$ & \m{3}124  & $x_{55}$ & 1\m{3}542 & $x_{73}$ & 352\m{4}1 \\
$x_2$  & \m{1}2     & $x_{20}$ & 1\m{2}43 & $x_{38}$ & 3\m{1}42  & $x_{56}$ & 1\m{4}235 & $x_{74}$ & 41\m{3}52 \\
$x_3$  & 1\m{2}     & $x_{21}$ & 13\m{2}4 & $x_{39}$ & 314\m{2}  & $x_{57}$ & 14\m{2}53 & $x_{75}$ & 421\m{3}5 \\
$x_4$  & 2\m{1}     & $x_{22}$ & 1\m{3}24 & $x_{40}$ & \m{3}142  & $x_{58}$ & 1\m{4}253 & $x_{76}$ & 4\m{2}513 \\
$x_5$  & \m{2}1     & $x_{23}$ & 134\m{2} & $x_{41}$ & 31\m{4}2  & $x_{59}$ & 15\m{2}43 & $x_{77}$ & 4\m{2}531 \\
$x_6$  & 1\m{2}3    & $x_{24}$ & 1\m{3}42 & $x_{42}$ & 3\m{2}41  & $x_{60}$ & 21\m{3}54 & $x_{78}$ & 425\m{3}1 \\
$x_7$  & \m{1}32    & $x_{25}$ & 14\m{2}3 & $x_{43}$ & 32\m{4}1  & $x_{61}$ & 241\m{3}5 & $x_{79}$ & 45\m{3}12 \\
$x_8$  & 13\m{2}    & $x_{26}$ & 1\m{4}23 & $x_{44}$ & 34\m{2}1  & $x_{62}$ & 2\m{4}135 & $x_{80}$ & 51\m{4}23 \\
$x_9$  & 1\m{3}2    & $x_{27}$ & 21\m{3}4 & $x_{45}$ & 4\m{1}32  & $x_{63}$ & 2\m{4}153 & $x_{81}$ & 5\m{2}413 \\
$x_{10}$ & 2\m{1}3  & $x_{28}$ & 23\m{1}4 & $x_{46}$ & 41\m{3}2  & $x_{64}$ & 245\m{3}1 & $x_{82}$ & 52\m{4}13 \\
$x_{11}$ & \m{2}13  & $x_{29}$ & 2\m{3}14 & $x_{47}$ & 4\m{2}13  & $x_{65}$ & 25\m{3}14 & $x_{83}$ & 5\m{2}431 \\
$x_{12}$ & 21\m{3}  & $x_{30}$ & 24\m{1}3 & $x_{48}$ & 421\m{3}  & $x_{66}$ & 314\m{2}5 & $x_{84}$ & 5\m{3}124 \\
$x_{13}$ & 23\m{1}  & $x_{31}$ & \m{2}413 & $x_{49}$ & 4\m{2}31  & $x_{67}$ & 31\m{4}25 & $x_{85}$ & 5\m{3}142 \\
$x_{14}$ & \m{2}31  & $x_{32}$ & 241\m{3} & $x_{50}$ & 42\m{3}1  & $x_{68}$ & 315\m{2}4 & $x_{86}$ & 531\m{4}2 \\
$x_{15}$ & 2\m{3}1  & $x_{33}$ & 2\m{4}13 & $x_{51}$ & 4\m{3}12  & $x_{69}$ & 32\m{4}15 & $x_{87}$ & 532\m{4}1 \\
$x_{16}$ & 3\m{1}2  & $x_{34}$ & \m{2}431 & $x_{52}$ & 134\m{2}5 & $x_{70}$ & 34\m{2}51 \\
$x_{17}$ & 31\m{2}  & $x_{35}$ & 24\m{3}1 & $x_{53}$ & 135\m{2}4 & $x_{71}$ & 351\m{4}2 \\  
$x_{18}$ & \m{3}12  & $x_{36}$ & 31\m{2}4 & $x_{54}$ & 1\m{3}524 & $x_{72}$ & 35\m{2}41
\end{tabular}
\caption{The 87 minimal marked permutations. The marked point is indicated with a dot.}
\label{f:minimal}
\end{figure}

\subsection{Examples of equations}

Before proceeding to the general analaysis of the ideals $\fb_1^*$ and $\fb_2^*$, we give some examples illustrating how to work with L-data and Q-data. The equations in Appendices~\ref{s:linear} and~\ref{s:quadratic} are obtained just like these examples. We prefer to write the relations associated to L- and Q-data as equations, as opposed to ideal generators.

Consider an L-datum $(X,a,b)$ where $X$ is a permutation and $a$ and $b$ are distinct points. Let $X_1, \ldots, X_n$ be the proper amalgamations of $X \setminus a$ and $X \setminus b$ over $X \setminus \{a,b\}$, and let $\delta$ be~1 if there is an improper amalgamation and~0 otherwise. Then the equation associated to this L-datum is
\begin{displaymath}
\lbb X \setminus b, a \rbb = \delta + \sum_{i=1}^n \lbb X_i, a \rbb.
\end{displaymath}
To be more precise, the difference of the two sides is the associated generator of $\fb_1$; to get the associated generator of $\fb_1^*$, replace each marked permutation with its corresponding minimal marked permutation. Note that $a$ remains the marked point in all terms.

According to the analysis in \S \ref{ss:amalg}, there are four cases that determine how amalgamations behave. Thus there are four types of linear equations associated to L-data. We give examples of each type below. To actually write down an L-datum, we use a solid dot to indicate the first marked point $a$, and an open dot to indicate the second marked point $b$, e.g., \m{1}2\mm{3}.

Let us first consider the L-datum just mentioned, namely, \m{1}2\mm{3}. The left side of the equation is the class of the marked structure obtained by deleting the second marked point. In this case, the two marked points are not neighbors, as 2 is between 1 and 3 in both orders. Thus there is no proper amalgamation besides the one we started with, and there is no improper amalgamation. The linear equation associated to this L-datum is thus
\begin{displaymath}
[\dot{1}2] = [\dot{1}23].
\end{displaymath}
This is simply the identity which equates the class of \m{1}23 with the class of its corresponding minimal marked permutation \m{1}2. This case does not contribute a generator to $\fb_1^*$.

Next consider the L-datum 1\mm{2}45\m{3}. Deleting the second marked point yields a marked permutation isomorphic to 134\m{2}, which corresponds to $x_{23}$ in Figure~\ref{f:minimal}. The two marked points are $<$-adjacent but not $\prec$-adjacent. Thus there are two proper amalgamations and no improper amalgamation ($\delta=0$). The first amalgamation is the one we are given, with the first marked point remaining marked, that is, 1245\m{3}. This is not minimal, since~1 is separated from~3. Deleting~1, we obtain a marked permutation isomorphic to 134\m{2}, which is again $x_{23}$. The second amalgamation is obtained by inverting the $<$ relation of 2 and 3. This is the permutation with underlying set $\{1,\ldots,5\}$ and orders
\begin{displaymath}
1<3<2<4<5, \qquad 1 \prec 2 \prec 4 \prec 5 \prec 3.
\end{displaymath}
This isomorphic to the marked permutation 1345\m{2}; note that we have essentially renamed~2 and~3, which is why~2 is now marked. This is not minimal, since 4 is not a neighbor to~2. Deleting~4, we obtain a marked permutation isomorphic to 134\m{2}, which is once again $x_{23}$. Thus the equation associated to this L-datum is
\begin{displaymath}
x_{23}=x_{23}+x_{23},
\end{displaymath}
which shows that $x_{23}$ vanishes in $\Theta(\fP)$. (This equation is actually the generator of $\fb_1^*$ associated to the L-datum.) Note that this equation is the first one in Group~I in Appendix~\ref{s:linear}.

Next consider the L-datum 13\m{2}\mm{4}. The left side of the equation is 13\m{2}, which is $x_8$. The two marked points are $\prec$-adjacent, but not $<$-adjacent. The two amalgamations are 13\m{2}4 and 134\m{2}. These are both minimal, and correspond to $x_{21}$ and $x_{23}$. The associated equation is thus
\begin{displaymath}
x_8=x_{21}+x_{23}.
\end{displaymath}
Combined with the previous paragraph, which showed $x_{23}=0$, we obtain $x_8=x_{21}$. This is the first equation in Group~II in Appendix~\ref{s:linear}.

Finally, consider the L-datum \m{1}\mm{2}. The left side of the equation is \m{1}, which is $x_1$. The two marked points are adjacent in both orders. The four amalgamations are (isomorphic to)
\begin{displaymath}
\dot{1}2, \quad 1\dot{2}, \quad 2\dot{1}, \quad 2\dot{1}.
\end{displaymath}
Note that when we switch~1 and~2 in the first order, we are essentially renaming~1 and~2, which is why the marked point changes; we already saw this phenomenon when analyzing the L-datum 1\mm{2}45\m{3} above. The four marked permutations above correspond to $x_2$, $x_3$, $x_4$, and $x_5$. We have $\delta=1$ in this case, and so the resulting equation is
\begin{displaymath}
x_1=1+x_2+x_3+x_4+x_5.
\end{displaymath}
This is the first equation in Group~IV in Appendix~\ref{s:linear}.

We now move on to Q-data. Since the two marked points in a Q-datum play the same role, we use a solid dot to indicate both. Consider the Q-datum \m{1}\m{3}24. The quadratic equation associated to it is
\begin{displaymath}
[\dot{1}324] \cdot [\dot{3}24] = [1\dot{3}24] \cdot [\dot{1}24].
\end{displaymath}
Replacing each factor with its associated minimal marked class, we obtain
\begin{displaymath}
x_7 x_{11} = x_{22} x_2.
\end{displaymath}
This is the generator of $\fb_2^*$ corresponding to our original Q-datum. Applying to this the Group~II linear equations $x_9=x_{11}=x_{22}$, we obtain the equivalent equation
\begin{displaymath}
x_7 x_9 = x_2 x_9,
\end{displaymath}
which is the first equation in the middle column of Appendix~\ref{s:quadratic}. This last step is explained in more detail in \S \ref{ss:quadratic} below.

\subsection{Linear equations}

If $(X,a,b)$ is a minimal L-datum then $(X \setminus b, a)$ is a minimal marked permutation, and so $X$ has at most six vertices. We wrote a computer program that listed all linear equations coming from L-data with at most six vertices. There were 547 distinct equations on this list. By examining this list carefully, we found~79 particularly nice equations that generated all others, and double-checked this with a MatLab computation. These equations (and the L-data giving rise to them) are listed in Appendix~\ref{s:linear}. These~79 equations generate the ideal $\fb_1^*$ of linear equations.

We make a few comments on the structure of these equations. They are collected into four groups. Group~I has 48 equations of the form $x_i=0$. The 48 values of $i$ are:
\begin{align*}
& 23, 26, 28, 30, 31, 32, 33, 34, 37, 38, 39, 40, 41, 43, 45, 48, \\
& 52, 53, 54, 55, 56, 57, 58, 59, 61, 62, 63, 64, 66, 67, 68, 69, \\
& 70, 71, 72, 73, 75, 76, 77, 78, 80, 81, 82, 83, 84, 85, 86, 87.
\end{align*}
Group~II has~19 equations of the form $x_i=\pm x_j$. These equations are summarized as follows:
\begin{align*}
x_8    &=  x_{10} =  x_{21} = x_{35} = x_{42} \\
x_9    &=  x_{11} =  x_{22} = x_{46} = x_{47} \\
x_{14} &=  x_{16} =  x_{24} = x_{25} = x_{49} \\
x_{15} &=  x_{17} =  x_{29} = x_{36} = x_{50} \\
x_{60} &= -x_{65} = -x_{74} = x_{79}
\end{align*}
By using just the equations in Group~I and~II, we see that each $x_i$ is equal to one of the following 21~variables modulo $\fb_1^*$:
\begin{align} \label{eq:21vars}
\begin{split}
& x_1, x_2, x_3, x_4, x_5, x_8, x_9, x_{14}, x_{15}, x_{60}, x_{65} \\
& x_6, x_7, x_{12}, x_{13}, x_{18}, x_{19}, x_{20}, x_{27}, x_{44}, x_{51}.
\end{split}
\end{align}
Additionally, $x_{65}=-x_{60}$ modulo $\fb_1^*$. The 10 equations in Group~III express the~10 variables on the second line above in terms of the ones on the first line; these equations are homogeneous, i.e., there are no constant terms. Finally, Group~IV contains two inhomogeneous equations among the variables on the first line. To conclude, we see that modulo $\fb_1^*$, each variable can be expressed as a homogeneous linear combination of the~10 variables
\begin{equation} \label{eq:10vars}
x_1, x_2, x_3, x_4, x_5, x_8, x_9, x_{14}, x_{15}, x_{60},
\end{equation}
and these variables satisfy two inhomogeneous linear equations. Thus $P^*/\fb_1^*$ is isomorphic to a polynomial ring in eight variables; in fact, one can use the above list of variables with $x_5$ and $x_{60}$ removed. Note that the above~10 variables index the columns in Appendix~\ref{s:meas}.

\subsection{Quadratic equations} \label{ss:quadratic}

Let $(X,a,b)$ be a Q-datum. Suppose there is some $c \ne a,b$ in $X$ that does not neighbor $a$ or $b$. Then $c$ does not neighbor $b$ in $X \setminus a$. Indeed, say $c$ were $<$-adjacent to $b$ in $X \setminus a$. Then it must be the case that $a$ is the unique element of $X$ between $c$ and $b$ under $<$. But this means $c$ is $<$-adjacent to $a$ in $X$, a contradiction. We conclude that the existence of such a $c$ implies $(X,a,b)$ is not minimal.

Suppose now that $(X,a,b)$ is a minimal Q-datum. We claim that $X$ has at most eight vertices. Indeed, $a$ and $b$ must be neighbors by minimality. By the previous paragraph, every other vertex of $X$ must neighbor either $a$ or $b$. Now, $a$ has at most four neighbors, and so at most three other than $b$; similarly, $b$ has at most three other than $a$. We thus see that there can be at most six elements of $X \setminus \{a,b\}$, which proves the claim.

We wrote a computer program to output all quadratic equations coming from Q-data with at most eight vertices. This yielded a list of 1,404 equations. Since this is a large number of equations, we introduced a bit of optimization. After computing each equation, we converted each variable to one of the~21 variables in \eqref{eq:21vars} using the linear equations from Group~I and~II. This produced the list of~90 equations in Appendix~\ref{s:quadratic}. These equations generate the image of $\fb_2^*$ in $P^*/\fb_1^*$.

\subsection{The main theorem}

The ring $\Theta(\fP)$ is isomorphic to $P^*/\fb^*$, where $P^*$ is the polynomial ring in the 87~variables $x_1, \ldots, x_{87}$, and $\fb^*$ is the ideal generated by the 79~linear equations in Appendix~\ref{s:linear} and the 90~quadratic equations in Appendix~\ref{s:quadratic}. We used the computer algebra system Macaulay2 to analyze this ring, as follows.

We first created the polynomial ring $R$ over the rational numbers in the~10 variables listed in \eqref{eq:10vars}. We then defined the ideal $I$ generated by the two inhomogeneous linear equations in Group~IV, together with the~90 quadratic equations in Appendix~\ref{s:quadratic}, after replacing the variables there with the appropriate homogeneous linear combination of the~10 variables we are now using. The \texttt{basis} function tells us that $R/I$ has dimension~37 as a vector space. The \texttt{rationalPoints} function gives~37 distinct rational solutions to the equations. This proves that $\Theta(\fP) \otimes \bQ$ is isomorphic, as a ring, to $\bQ^{37}$. We note that the solutions to these equations are exactly giving the values of the~37 measures on the~10 classes in \eqref{eq:10vars}, as recorded in Appendix~\ref{s:meas}.

To complete the proof integrally, we continued as follows. We recreated $R$ and $I$ over $\bZ$. Calling the \texttt{basis} function shows that $R/I$ is generated as a $\bZ$-module by at most~47 elements. (Over non-fields, this function only gives a generating set, despite its name.) Calling the \texttt{leadingTerm} function, we see that the initial ideal of $I$ is generated by monomials whose coefficients are either~1 or~2; this shows that $(R/I)[1/2]$ is flat over $\bZ[1/2]$. Working modulo~2 and calling the \texttt{basis} function again, we find that $(R/I) \otimes \bF_2$ has dimension~37 as an $\bF_2$-vector space. All of this implies that $\Theta(\fP)=R/I$ is a free $\bZ$-module of rank~37. Since its~37 rational points all have integer coordinates and remain distinct modulo any prime, it follows that $\Theta(\fP)$ is isomorphic to $\bZ^{37}$ as a ring.

\section{The support of a measure} \label{s:supp}

Let $\mu$ be a measure on a Fra\"iss\'e class $\fF$ valued in a domain $k$. For a structure $X$, define $\mu(X)=\mu(\emptyset \subset X)$. We say that $X$ is \defn{regular} if $\mu(X)$ is a unit of $k$ for all $X$. Define the \defn{support} of $\mu$ to be the class $\fF'$ of structures $X$ for which $\mu(X) \ne 0$. Then $\fF'$ is a Fra\"iss\'e subclass of $\fF$, and $\mu$ induces a measure $\mu'$ on $\fF'$; see \cite[\S 2.7]{arboreal}. Note that $\mu'$ is regular if $k$ is a field, or if we regard it as valued in $\operatorname{Frac}(k)$.

We now consider the above concepts for our measures on the class $\fP$ of permutations. Cameron \cite{Cameron} showed that there are exactly seven Fra\"iss\'e subclasses of $\fP$:
\begin{itemize}
\item The class $\fP_1$ consisting of just the empty permutation.
\item The class $\fP_2$ consisting of permutations of cardinality at most one.
\item The class $\fP_3$ of permutations where the two orders coincide. It is defined by the excluded pattern 21, meaning $\fP_3$ consists of permutations which do not embed 21.
\item The class $\fP_4$ of permutations where the two orders are opposite. It is defined by the excluded pattern 12.
\item The class $\fP_5$ consisting of increasing sequences of decreasing sequences, defined by the excluded patterns 231 and 312.
\item The class $\fP_6$ consisting of decreasing sequences of increasining sequences, defined by the excluded patterns 213 and 132.
\item The class $\fP_7=\fP$ of all permutations.
\end{itemize}
If $\mu$ is any measure for $\fP$ then the support of $\mu$ is one of the above seven classes. The following table shows the support of each measure (see Appendix~\ref{s:meas} for the measures):
\vskip 3pt
\begin{center} \small
\begin{tabular}{c|cccccc}
Class & $\fP_1$ & $\fP_2$ & $\fP_3$ & $\fP_4$ & $\fP_5$ & $\fP_6$ \\
Measure(s) & 1--20 & 21--27 & 28--31 & 32--35 & 36 & 37
\end{tabular}
\end{center}
\vskip 2pt
There is no measure with support $\fP_7$, meaning there is no regular measure on $\fP$. Indeed, since $x_{23}=0$ in $\Theta(\fP)$, every measure vanishes on the permutation 1342.

\section{Tensor categories} \label{s:tensor}

Our interest in measures stems from applications to tensor categories. We close with a few brief remarks about how measures on permutations relate to tensor categories.

\subsection{Background}

Let $\fF$ be a Fra\"iss\'e class, and let $\mu$ be a measure for $\fF$ valued in a field $k$. In \cite[\S 4.3]{arboreal}, we define $k$-linear tensor category $\uPerm_k(\fF; \mu)$, which is essentially equivalent to the construction in \cite[\S 8]{repst}. The objects of this category are formal direct sums of symbols $\Vec_X$, where $X$ belongs to $\fF$, and morphisms $\Vec_X \to \Vec_Y$ are formal linear combinations of amalgamations of $X$ and $Y$. The structure constants for composition involve the measure.

The general expectation is that if $\mu$ is regular then one can hope that the Karoubi envelope $\uPerm_k(\fF; \mu)^{\rm kar}$ of $\uPerm_k(\fF; \mu)$ is semi-simple; this is not always true though. The following result gives a positive result in this direction. See \cite[Proposition~4.1]{arboreal} for a proof, and \cite[\S 4.2]{arboreal} for the definition of pre-Tannakian category.

\begin{proposition} \label{prop:ss1}
Suppose that $\mu$ is regular. Additionally, suppose the following:
\begin{enumerate}
\item $\mu$ lifts to a $\bZ[1/N]$-valued measure on $\fF$ for some $N$.
\item Only finitely many prime numbers occur as as divisors of $\# \Aut(X)$, for $X \in \fF$.
\item Given $Y \in \fF$, a proper substructure $X$, and an integer $h \ge 1$, there exists an embedding $X \to Z$ in $\fF$ that admits at least $h$ distinct extensions to $Y$.
\end{enumerate}
Then $\uPerm_k(\fF; \mu)^{\rm kar}$ is a semi-simple pre-Tannakian category.
\end{proposition}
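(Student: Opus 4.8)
The plan is to reduce the statement to the semisimplicity of the endomorphism algebras of the generating objects, and then invoke a general principle about Karoubian categories. Write $\cC := \uPerm_k(\fF; \mu)$. By construction $\cC$ is a $k$-linear additive symmetric monoidal category, so the first step is to record two structural facts about it. First, $\cC$ is rigid: each object $\Vec_X$ is self-dual, the evaluation and coevaluation being built from the trivial amalgamation of $X$ with itself, with the zigzag identities reducing to the multiplicativity and additivity axioms \S \ref{ss:meas}(b),(c); moreover one computes $\dim \Vec_X = \mu(X)$, which is a unit since $\mu$ is regular. Second, $\cC$ has finite-dimensional Hom spaces, because $\Hom_\cC(\Vec_X, \Vec_Y)$ has a basis indexed by the (finitely many) amalgamations of $X$ and $Y$. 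Both properties are inherited by the Karoubi envelope $\cC^{\rm kar}$.

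Granting that $A_X := \End_\cC(\Vec_X)$ is a semisimple $k$-algebra for every $X \in \fF$, the proposition follows from a standard fact: a Hom-finite Karoubian $k$-linear category all of whose endomorphism algebras are semisimple is automatically a semisimple abelian category. Indeed, an indecomposable object then has an endomorphism algebra that is finite-dimensional, semisimple, and has no nontrivial idempotents, hence is a division ring; by Krull--Schmidt every object is a finite direct sum of such objects, and there can be no nonzero morphism between two non-isomorphic indecomposables, for otherwise the endomorphism algebra of their direct sum would contain a nonsplit triangular subalgebra and fail to be semisimple. Thus $\cC^{\rm kar}$ is semisimple abelian; since it is also rigid and symmetric monoidal with $\End(\mathbf{1}) = \End(\Vec_\emptyset) = k$, and since every object has finite length and every Hom space is finite-dimensional, it is semi-simple pre-Tannakian.

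So the task is to prove each $A_X$ is semisimple. The algebra $A_X$ has a basis indexed by amalgamations of $X$ with itself, and its structure constants lie in the subring of $k$ generated by the values of $\mu$. Guided by the diagram algebras (of Brauer, partition, or rook type) that occur as endomorphism algebras in Deligne-style interpolation categories, I expect $A_X$ to carry a cellular structure whose cell modules are indexed by the substructures $W$ of $X$ of a prescribed shape, and whose cell bilinear forms have Gram determinants that factor as a product, over such $W$, of a power of $\mu(W)$ times an integer assembled from orders of automorphism groups and from numbers of extensions of $W$ to larger structures. By the Graham--Lehrer criterion, $A_X$ is semisimple once all of these Gram determinants are units in $k$. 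The powers of $\mu(W)$ are units by regularity together with hypothesis (1), which exhibits $\mu$ as the specialization of a $\bZ[1/N]$-valued measure and thereby pins down the denominators appearing; the automorphism-group factors are harmless by hypothesis (2), which bounds the primes that can occur (and which is in fact vacuous for permutations and similar rigid classes); and the extension-counting factors are nonzero by hypothesis (3), which forces those counts never to collapse. Hence all the Gram determinants are units, $A_X$ is semisimple, and we are done.

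The main obstacle is clearly the combinatorial input in the previous paragraph: for an arbitrary Fra\"iss\'e class there is no a priori reason for $A_X$ to be cellular, nor for a cell Gram determinant to split into factors linear in $\mu$, so the real work is to construct the cellular structure uniformly --- presumably via an inductive ``branching'' relating $A_X$ to the algebras $A_{X'}$ for substructures $X' \subset X$, compatibly with the trace --- and to determine exactly which substructures index the cell modules and with what multiplicities. In the classical diagram-algebra cases this is known, but the general version is where I would expect the difficulty to lie; by comparison, the arithmetic step of checking invertibility of each factor using hypotheses (1)--(3) is routine, and the categorical reductions above are purely formal.
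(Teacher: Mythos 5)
Your formal reductions are mostly sound: $\uPerm_k(\fF;\mu)$ is rigid with $\Vec_X$ self-dual of dimension $\mu(X)$, Hom spaces are finite-dimensional, and the lemma that a Hom-finite Karoubian $k$-linear category \emph{all} of whose endomorphism algebras are semisimple is semisimple abelian is correct as stated (your Krull--Schmidt argument for it works). But the substance of the proposition is then entirely relocated into the claim that these endomorphism algebras are semisimple, and that claim is exactly what you do not prove. Everything from ``I expect $A_X$ to carry a cellular structure'' onward is conjecture, as you concede in your final paragraph: you construct no cell datum, identify no cell modules, and the asserted factorization of Gram determinants into powers of $\mu(W)$, automorphism orders, and extension counts is not derived from anything. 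Hypotheses (a)--(c) --- the only nontrivial inputs to the proposition --- enter your argument purely rhetorically (``harmless'', ``routine'', ``forces those counts never to collapse''); hypothesis (c), for instance, is an existence statement about embeddings with many extensions, and you supply no mechanism by which it would make a specific determinant a unit of $k$. There is also a secondary mismatch in the formal part: you only propose to control $A_X=\End(\Vec_X)$ for $X\in\fF$, but your lemma requires semisimplicity of $\End(\bigoplus_i \Vec_{X_i})$ for arbitrary finite families, and semisimplicity of the summands' endomorphism algebras does not imply this (the off-diagonal Hom spaces can generate a nilpotent ideal). So the proposal has a genuine gap at its core, not a missing detail.

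For what it is worth, the paper does not prove this proposition internally; it cites \cite[Proposition~4.1]{arboreal}, whose argument (going back to \cite{repst}) does not pass through cellularity. Roughly, one shows directly that the natural spanning set of $\Hom(\Vec_X,\Vec_Y)$ by amalgamations is linearly independent and that the categorical trace pairing on Hom spaces is nondegenerate, by evaluating morphisms against auxiliary objects $\Vec_Z$: hypothesis (c) supplies structures $Z$ admitting arbitrarily many extensions, which is what separates the would-be basis elements, while (a) and (b) control which primes can appear in the resulting determinants so that nonvanishing survives in $k$. If you want to pursue your route, the honest statement of what remains is: prove, using (a)--(c), that the trace form on every $\Hom(\Vec_X,\Vec_Y)$ is perfect. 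That is the proposition.
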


Let $\fF'$ be the support of $\mu$, and let $\mu'$ be the induced regular measure on $\fF'$. The following result is \cite[Corollary~4.6]{arboreal}. See \cite[\S 2.3]{EtingofOstrik} for background on semi-simplification.

\begin{proposition} \label{prop:ss2}
Suppose $\uPerm_k(\fF'; \mu')^{\rm kar}$ is semi-simple. Then nilpotent endomorphisms in $\uPerm_k(\fF; \mu)^{\rm kar}$ have categorical trace~0, and the semi-simpliciation of $\uPerm_k(\fF; \mu)^{\rm kar}$ is $\uPerm_k(\fF'; \mu')^{\rm kar}$.
\end{proposition}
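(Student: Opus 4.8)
Write $\cC = \uPerm_k(\fF;\mu)^{\rm kar}$ and $\cC' = \uPerm_k(\fF';\mu')^{\rm kar}$, which is semi-simple by hypothesis. The plan is to realize $\cC'$ as the quotient of $\cC$ by the tensor ideal $\cN$ of negligible morphisms, after which both assertions follow formally. The key construction is a $k$-linear symmetric monoidal functor $F \colon \cC \to \cC'$ defined on generators by $F(\Vec_X) = \Vec_X$ if $X \in \fF'$ (that is, $\mu(X) \ne 0$) and $F(\Vec_X) = 0$ otherwise, and on morphisms by sending an amalgamation $Z$ to itself if $Z \in \fF'$ and to $0$ otherwise, then extending to Karoubi envelopes. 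Since Hom-spaces in $\uPerm_k(\fF;\mu)$ are \emph{free} on amalgamations, defining $F$ on this basis is unambiguous; the content is that $F$ respects composition and the tensor structure. This comes down to heredity of $\fF'$ together with $\mu' = \mu|_{\fF'}$: because $\fF'$ is closed under substructures, any amalgamation containing a copy of a structure outside $\fF'$ lies outside $\fF'$, and, conversely, any substructure entering the measure-weighted structure constants for a composition or tensor product of $\fF'$-objects again lies in $\fF'$, where $\mu$ and $\mu'$ agree. Hence the structure constants of $\cC'$ are exactly those of $\cC$ with every term supported on a structure outside $\fF'$ deleted — which is precisely the rule defining $F$ — and the monoidal coherence isomorphisms are identities under the decomposition of $\Vec_X \otimes \Vec_Y$ as a sum over amalgamations of $X$ and $Y$. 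By construction $F$ is full and essentially surjective onto $\cC'$, so $\cC' \simeq \cC/\cK$ where $\cK = \ker(F)$.

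Next I would show $\cK = \cN$. As a monoidal functor between rigid categories, $F$ preserves duals, hence categorical traces: $\mathrm{tr}_{\cC}(h) = \mathrm{tr}_{\cC'}(F(h))$ for every endomorphism $h$. If $f \in \cK$, then for any $g$ one has $\mathrm{tr}_{\cC}(g f) = \mathrm{tr}_{\cC'}(F(g)F(f)) = 0$, so $f$ is negligible; thus $\cK \subseteq \cN$. Conversely, let $f \in \cN$. Using fullness of $F$, every morphism of $\cC'$ of the appropriate type is $F(g)$ for some $g$, so $\mathrm{tr}_{\cC'}(F(g)F(f)) = \mathrm{tr}_{\cC}(gf) = 0$; hence $F(f)$ is negligible in the semi-simple category $\cC'$ and therefore $F(f) = 0$, i.e. $f \in \cK$. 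So $\cN = \cK$ and $\cC/\cN \simeq \cC'$.

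Finally I would conclude. The category $\cC$ has finite-dimensional Hom-spaces, since an amalgamation of two fixed structures lies in $\fF$ and $\fF$ has only finitely many structures of each size, and $\mathrm{End}(\mathbf{1}) = k$; so the semi-simplification theory of \cite[\S 2.3]{EtingofOstrik} applies, and the semi-simplification of $\cC$ is $\cC/\cN \simeq \cC'$. For the trace statement, if $f \colon X \to X$ is nilpotent in $\cC$ then $F(f)$ is nilpotent in the semi-simple category $\cC'$; writing the object as a direct sum of simples, its endomorphism algebra is a product of matrix algebras (over division rings) and the categorical trace of $F(f)$ is a weighted sum of (reduced) traces of nilpotent matrices, each of which vanishes, so $\mathrm{tr}_{\cC}(f) = \mathrm{tr}_{\cC'}(F(f)) = 0$. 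The main obstacle is the first step: checking carefully, against the explicit formulas for composition and tensor product in $\uPerm_k(\fF;\mu)$ from \cite{arboreal}, that $F$ is a well-defined monoidal functor, i.e. that deleting all $\fF'^{\,c}$-terms is compatible with the measure-weighted structure constants. Everything after that is a formal consequence of $F$ being full, essentially surjective and monoidal, and of $\cC'$ being semi-simple.
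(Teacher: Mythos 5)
The paper does not prove this proposition; it simply cites \cite[Corollary~4.6]{arboreal}, so there is no in-paper argument to compare against. Your strategy (a ``restriction to the support'' functor $F\colon \cC\to\cC'$, identification of $\ker F$ with the negligible ideal, then Etingof--Ostrik) is surely the intended one, but two points in your write-up are genuine gaps rather than routine checks.

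First, the well-definedness of $F$, which you correctly flag as the main obstacle, is justified with a claim that is false as stated: it is \emph{not} true that every structure entering the measure-weighted structure constants for a composition of $\fF'$-morphisms again lies in $\fF'$. When composing amalgamations $Z,W\in\fF'$ over $Y$, the sum runs over amalgamations $V$ in all of $\fF$, and $V$ can certainly fail to lie in $\fF'$. The actual reason the functor works is that such terms have \emph{vanishing coefficients}: if $W_0\subset V$ is the relevant substructure with $W_0\in\fF'$ and $V\notin\fF'$, then $0=\mu(V)=\mu(W_0\subset V)\,\mu(W_0)$ with $\mu(W_0)\ne 0$, so $\mu(W_0\subset V)=0$ since $k$ is a domain. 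This multiplicativity argument (which is the content of \cite[\S 2.7]{arboreal}) is what makes ``delete all terms outside $\fF'$'' compatible with composition, and it should be made explicit.

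Second, the step ``$F(f)$ is negligible in the semi-simple category $\cC'$, therefore $F(f)=0$'' is not a formal consequence of semi-simplicity. A semi-simple pre-Tannakian category over a non-algebraically-closed field can contain a simple object of categorical dimension~$0$ (e.g.\ $\mathrm{Rep}_{\mathbf{F}_2}(\mathbf{Z}/5)$ has a simple object of $\mathbf{F}_2$-dimension~$4$, hence categorical dimension $0$), and the identity of such an object is a nonzero negligible morphism. In that situation $\cN\supsetneq\cK$ and the semi-simplification of $\cC$ would be a \emph{proper} quotient of $\cC'$, contradicting the conclusion you want. So your inclusion $\cN\subseteq\cK$ requires the additional input that every simple object of $\cC'$ has nonzero categorical dimension; this holds for the categories actually arising in this paper (the relevant simples have dimension $\pm1$), but it is an extra fact that must be supplied, not a consequence of semi-simplicity alone. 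The inclusion $\cK\subseteq\cN$ and the vanishing of traces of nilpotents (statement (b)) are fine as you argue them, since a nilpotent endomorphism in a semi-simple category has trace~$0$ regardless of dimensions of simples.
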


\subsection{Results}

Let $\mu$ be one of the~37 measures for $\fP$. Let $\fP'$ be the support of $\mu$, and let $\mu'$ be the restriction of $\mu$ to $\fP'$. Let $k$ be a field, and regard $\mu$ and $\mu'$ as $k$-valued.

\begin{theorem}
We have the following:
\begin{enumerate}
\item The category $\uPerm_k(\fP'; \mu')^{\rm kar}$ is a semi-simple pre-Tannakian category.
\item Nilpotent endomorphisms in $\uPerm_k(\fP; \mu)$ have categorical trace~0.
\item The semisimplification of $\uPerm_k(\fP; \mu)^{\rm kar}$ is $\uPerm_k(\fP'; \mu')^{\rm kar}$.
\end{enumerate}
\end{theorem}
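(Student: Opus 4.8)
The plan is to deduce all three parts from the two general results in \S\ref{s:tensor} (Propositions~\ref{prop:ss1} and~\ref{prop:ss2}), so the real content is verifying their hypotheses for the seven support classes $\fP_1, \ldots, \fP_6$ appearing in the table of \S\ref{s:supp}. Note that the support $\fP_7=\fP$ never occurs, so $\mu'$ is always a \emph{regular} measure (valued in $k$, or equivalently in $\operatorname{Frac}(k)=k$ since $k$ is a field) on one of $\fP_1,\ldots,\fP_6$. Thus part (a) will follow from Proposition~\ref{prop:ss1} applied to $\fF'=\fP'$, and then parts (b) and (c) follow immediately from Proposition~\ref{prop:ss2} applied to $\fF=\fP$, $\fF'=\fP'$. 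So the entire proof reduces to checking conditions (1)--(3) of Proposition~\ref{prop:ss1} for each of the six classes.

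First I would dispose of the boring classes. For $\fP_1$ (the empty permutation only) and $\fP_2$ (cardinality $\le 1$), the category is essentially trivial — it is just $\Vec_k$ or a very small variant — so semi-simplicity and the pre-Tannakian property are immediate, and one can either check the conditions directly or simply observe the conclusion by hand. For the remaining four classes, I would verify the three conditions in turn. Condition (1): each measure lifts to a $\bZ[1/N]$-valued (indeed $\bZ$-valued, by the main theorem) measure, so we may take $N=1$; this is automatic from the integrality statement already proved. Condition (2): one needs to bound the primes dividing $\#\Aut(X)$ for $X$ in the class. For $\fP_3$ and $\fP_4$, the two orders coincide (resp.\ are opposite), so a permutation of size $n$ has trivial automorphism group — only the prime set $\emptyset$ occurs. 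For $\fP_5$ (increasing sequences of decreasing sequences) and $\fP_6$ (the mirror image), an automorphism must preserve the block structure and can only permute equal-length blocks and act within blocks compatibly; but since the $<$-order is fixed, in fact the automorphism group is again trivial. So condition (2) holds vacuously in all four cases.

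The substantive point is condition (3): given $Y$ in the class, a proper substructure $X \subsetneq Y$, and $h \ge 1$, one must produce an embedding $X \to Z$ in the class with at least $h$ extensions to $Y$. This is the ``richness'' condition that fails for $\fP$ itself but should hold for each proper Fra\"iss\'e subclass; I expect this to be the main obstacle, and it is where the combinatorics of \S\ref{ss:amalg} enters. For $\fP_3$, a permutation is just a finite totally ordered set, $\fP_3 \cong$ the class of finite linear orders, whose Fra\"iss\'e limit is $(\bQ,<)$; here condition (3) is classical and amounts to the fact that between any two points of $\bQ$ there are arbitrarily many more — one takes $Z$ to be $X$ with enough extra points inserted into the ``gap'' where the new element of $Y \setminus X$ lives, each giving a distinct extension. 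The case $\fP_4$ is identical by order-reversal symmetry. For $\fP_5$ and $\fP_6$ one runs the same argument inside the relevant Fra\"iss\'e limit (a lexicographic-type order), checking that one can always enlarge a block, or add a new block, so as to create $h$ distinct valid positions for the new point of $Y$ while staying inside the excluded-pattern class; the excluded patterns $231,312$ (resp.\ $213,132$) have to be respected, but since inserting points into an existing decreasing run, or appending a new run, does not create these patterns, the construction goes through. Once condition (3) is verified in these four cases, Proposition~\ref{prop:ss1} gives part (a), and Proposition~\ref{prop:ss2} delivers (b) and (c).

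I would present this as: reduce to Propositions~\ref{prop:ss1} and~\ref{prop:ss2}; note the support is never $\fP$; dispatch $\fP_1,\fP_2$ by inspection; check (1) from integrality and (2) from triviality of automorphism groups; and spend the bulk of the argument on (3), handling $\fP_3,\fP_4$ via finite linear orders and $\bQ$, and $\fP_5,\fP_6$ via the analogous layered Fra\"iss\'e limits, in each case exhibiting the embedding $X \to Z$ with many extensions explicitly.
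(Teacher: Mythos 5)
Your proposal is correct and follows essentially the same route as the paper: reduce to Propositions~\ref{prop:ss1} and~\ref{prop:ss2}, dispose of the supports $\fP_1,\fP_2$ by hand, take $N=1$ for condition (1), use triviality of automorphisms for condition (2), and verify condition (3) for the nontrivial subclasses (the paper leaves this last check as ``not difficult,'' which your sketch fills in). One small simplification: for condition (2) you need no block-structure analysis for $\fP_5,\fP_6$, since \emph{every} finite permutation has trivial automorphism group (an automorphism must preserve the total order $<$ on a finite set), which is exactly the paper's observation.
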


\begin{proof}
If $\fP'$ is $\fP_1$ or $\fP_2$ of $\fP$ then $\uPerm_k(\fP'; \mu')$ is just the category of finite dimensional vector spaces, and so (a) holds. Suppose now that $\fP'$ is non-trivial. Proposition~\ref{prop:ss1}(a) holds with $N=1$. Proposition~\ref{prop:ss1}(b) holds since any automorphism of a permutation is trivial. It is not difficult to verify that $\fP'$ satisfies Proposition~\ref{prop:ss1}(c). Thus Proposition~\ref{prop:ss1} gives statement~(a). Statements (b) and (c) now follow from Proposition~\ref{prop:ss2}.
\end{proof}

If $\fP'$ is $\fP_3$ or $\fP_4$ then the pre-Tannakian category in (a) is equivalent to the Delannoy category studied in \cite{line}. If $\fP'$ is $\fP_5$ or $\fP_6$ then the pre-Tannakian category in (a) is new. The automorphism group of the Fra\"iss\'e limit of $\fP_5$ or $\fP_6$ is the wreath product $G \wr G$ appearing in Corollary~\ref{cor:wreath}, and so the above theorem in these cases implies the corollary.

\clearpage
\newpage
~\vfill
\appendix
\section{Linear equations} \label{s:linear}

\noindent
\textit{Group I (48 equations).} These equations all have the form $x_i=0$. In the following table, we simply write the variable instead of the whole equation. Each L-datum here actually gives the equation $x_i=x_i+x_i$, which simplifies to $x_i=0$.
\vskip 2ex
\begin{center}
\begin{tabular}{cc|cc|cc|cc|cc|cc}
$x_{23}$ &  1\mm{2}45\m{3} & $x_{37}$ &  \m{3}12\mm{4}5 & $x_{52}$ & 1\mm{2}45\m{3}6 &
 $x_{61}$ & 2\mm{3}51\m{4}6 & $x_{70}$ & 34\m{2}\mm{5}61 & $x_{80}$ & 61\mm{2}\m{5}34 \\
$x_{26}$ &  1\mm{2}\m{5}34 & $x_{38}$ &  3\m{1}\mm{4}52 & $x_{53}$ & 1\mm{2}46\m{3}5 &
 $x_{62}$ & 2\m{4}13\mm{5}6 & $x_{71}$ & 3\mm{4}61\m{5}2 & $x_{81}$ & 6\m{2}51\mm{3}4 \\
$x_{28}$ &  23\m{1}\mm{4}5 & $x_{39}$ &  4\mm{2}15\m{3} & $x_{54}$ & 1\m{3}62\mm{4}5 &
 $x_{63}$ & 2\m{4}16\mm{5}3 & $x_{72}$ & 36\m{2}\mm{5}41 & $x_{82}$ & 63\mm{2}\m{5}14 \\
$x_{30}$ &  25\m{1}\mm{4}3 & $x_{40}$ &  \m{3}15\mm{4}2 & $x_{55}$ & 1\m{3}65\mm{4}2 &
 $x_{64}$ & 2\mm{3}56\m{4}1 & $x_{73}$ & 3\mm{4}62\m{5}1 & $x_{83}$ & 6\m{2}54\mm{3}1 \\
$x_{31}$ &  \m{2}51\mm{3}4 & $x_{41}$ &  41\mm{2}\m{5}3 & $x_{56}$ & 1\mm{2}\m{5}346 &
 $x_{66}$ & 314\m{2}\mm{5}6 & $x_{75}$ & 5\mm{3}21\m{4}6 & $x_{84}$ & 6\m{3}12\mm{4}5 \\
$x_{32}$ &  2\mm{3}51\m{4} & $x_{43}$ &  43\mm{2}\m{5}1 & $x_{57}$ & 14\m{2}\mm{5}63 &
 $x_{67}$ & 41\mm{2}\m{5}36 & $x_{76}$ & 4\m{2}\mm{5}613 & $x_{85}$ & 6\m{3}15\mm{4}2 \\
$x_{33}$ &  3\mm{2}\m{5}14 & $x_{45}$ &  5\m{1}\mm{4}32 & $x_{58}$ & 1\mm{2}\m{5}364 &
 $x_{68}$ & 316\m{2}\mm{5}4 & $x_{77}$ & 4\m{2}\mm{5}631 & $x_{86}$ & 641\mm{2}\m{5}3 \\
$x_{34}$ &  \m{2}54\mm{3}1 & $x_{48}$ &  5\mm{3}21\m{4} & $x_{59}$ & 16\m{2}\mm{5}43 &
 $x_{69}$ & 43\mm{2}\m{5}16 & $x_{78}$ & 5\mm{3}26\m{4}1 & $x_{87}$ & 643\mm{2}\m{5}1
\end{tabular}
\end{center}
\vskip 2ex

\noindent
\textit{Group II (19 equations).}
In the following, one must combine the equation coming from the given L-datum with a Group~I equation to obtain the stated equation. For example, the first L-datum 13\m{2}\mm{4} gives $x_8=x_{21}+x_{23}$, and we have $x_{23}=0$.
\vskip 2ex
\begin{center}
\begin{tabular}{Ac|Ac|Ac|Ac}
x_{21}&=x_8    & 13\m{2}\mm{4}  & x_{21}&=x_{10} & \mm{1}3\m{2}4 &
x_{21}&=x_{35} & 24\mm{1}\m{3}5 & x_{21}&=x_{42} & \mm{2}4\m{3}51 \\
x_{22}&=x_9    & 1\m{3}2\mm{4}  & x_{22}&=x_{11} & \mm{1}\m{3}24 &
x_{22}&=x_{46} & \mm{3}1\m{4}25 & x_{22}&=x_{47} & 5\mm{1}\m{3}24 \\
x_{49}&=x_{14} & \m{2}\mm{4}31  & x_{49}&=x_{16} & 4\m{1}3\mm{2} &
x_{49}&=x_{24} & 1\m{3}\mm{5}42 & x_{49}&=x_{25} & 1\m{3}\mm{5}42 \\
x_{50}&=x_{15} & \mm{3}2\m{4}1  & x_{50}&=x_{17} & 42\mm{1}\m{3} &
x_{50}&=x_{29} & \mm{3}2\m{4}15 & x_{50}&=x_{36} & 42\mm{1}\m{3}5
\end{tabular}
\vskip 3pt
\begin{tabular}{cc|cc|cc}
$x_{60}=-x_{65}$ & 25\mm{1}\m{3}64 & $x_{60}=-x_{74}$ & 251\m{3}6\mm{4} &
$x_{79}=-x_{74}$ & 51\m{4}\mm{6}23
\end{tabular}
\end{center}
\vskip 2ex

\noindent
\textit{Group III  (10 equations).} In the first four rows, one must combine the equation coming from the L-datum with equations from Group~I and~II to obtain the stated equation. In the final row, one must also use equations from the preceeding rows. For example, 1\m{2}4\mm{3} gives $x_6=x_{20}+x_{24}$, and we have $x_{20}=x_{15}+x_{60}$ (first row below) and $x_{24}=x_{14}$ (Group~II).
\vskip 2ex
\begin{center}
\begin{tabular}{Ac|Ac}
x_{20} &= x_{15}+x_{60} & 2\mm{1}\m{3}54 & x_7    &= x_2-x_{14} & \m{1}3\mm{2} \\
x_{27} &= x_{14}+x_{60} & 21\m{3}5\mm{4} & x_{12} &= x_3-x_{15} & 2\mm{1}\m{3} \\
x_{44} &= x_9+x_{60}    & 45\mm{1}\m{3}2 & x_{13} &= x_4-x_8 & \mm{1}3\m{2} \\
x_{51} &= x_8+x_{60}    & \mm{3}5\m{4}12 & x_{18} &= x_5-x_9 & \mm{1}\m{3}2 \\
x_6    &= x_{14}+x_{15}+x_{60} & 1\m{2}4\mm{3} & x_{19} &= x_8+x_9+x_{60} & \mm{2}4\m{3}1
\end{tabular}
\end{center}
\vskip 2ex

\noindent
\textit{Group IV (2 equations).}
The first L-datum directly gives the stated equation. The second gives the equation $x_{17}=1+x_6+x_8+x_{17}+x_{46}$, which is equivalent to the stated equation using the equation for $x_6$ in Group~III, and the identity $x_{46}=x_9$ from Group~II.
\vskip 2ex
\begin{center}
\begin{tabular}{Ac}
0 &= 1-x_1+x_2+x_3+x_4+x_5 & \m{1}\mm{2} \\
0 &= 1+x_8+x_9+x_{14}+x_{15}+x_{60} & 41\m{2}\mm{3}
\end{tabular}
\end{center}
\vfill
\clearpage
\newpage

~\vfill
\section{Quadratic equations} \label{s:quadratic}
\vskip 1ex
The following table lists the 90 quadratic generators of the $\fb^*$ ideal, and the associated Q-data. We have applied linear equations in Group~I and~II to convert the variables appearing in these equations to the~21 listed in \eqref{eq:21vars}, as described in \S \ref{ss:quadratic}.
\vskip 3ex
\begin{center}
\begin{tabular}{Ac|Ac|Ac}
0&=x_7x_{13}         & \m{1}34\m{2} & x_2x_9&=x_7x_9             & \m{1}\m{3}24 & x_4x_{14}&=x_{13}x_{14}    & 4\m{2}3\m{1}   \\
0&=x_8x_{14}         & 1\m{3}4\m{2} & x_6x_8&=x_6x_9             & 1\m{3}\m{2}4 & x_4x_{15}&=x_{13}x_{15}    & 42\m{3}\m{1}   \\
0&=x_8x_{15}         & 13\m{4}\m{2} & x_3x_8&=x_8x_{12}          & 13\m{2}\m{4} & x_{14}x_{19}&=x_{15}x_{19} & 4\m{2}\m{3}1   \\
0&=x_7x_{18}         & \m{1}\m{4}23 & x_3x_9&=x_9x_{12}          & 1\m{3}2\m{4} & x_5x_{14}&=x_{14}x_{18}    & \m{4}\m{2}31   \\
0&=x_9x_{14}         & 1\m{4}\m{2}3 & x_9x_{14}&=x_9x_{15}       & 1\m{3}\m{4}2 & x_5x_{15}&=x_{15}x_{18}    & \m{4}2\m{3}1   \\
0&=x_9x_{15}         & 1\m{4}2\m{3} & x_8x_{14}&=x_8x_{15}       & 14\m{2}\m{3} & x_{14}x_{19}&=x_{14}x_{51} & 4\m{3}\m{1}2   \\
0&=x_{12}x_{13}      & 23\m{1}\m{4} & x_4x_7&=x_7x_8             & \m{1}43\m{2} & x_{15}x_{19}&=x_{15}x_{51} & 4\m{3}1\m{2}   \\
0&=x_{12}x_{18}      & \m{3}12\m{4} & x_5x_7&=x_7x_9             & \m{1}\m{4}32 & x_5x_{18}&=x_{18}x_{51}    & \m{4}\m{3}12   \\
x_1x_2&=x_1x_3       &   \m{1}\m{2} & x_8x_8&=x_8x_{19}          & 14\m{3}\m{2} & x_6x_{20}&=x_{20}x_{20}    & 1\m{2}\m{3}54  \\
x_1x_4&=x_1x_5       &   \m{2}\m{1} & x_9x_9&=x_9x_{19}          & 1\m{4}\m{3}2 & x_8x_{20}&=x_9x_{20}       & 1\m{3}\m{2}54  \\
x_2x_2&=x_2x_6       &  \m{1}\m{2}3 & x_6x_8&=x_8x_{27}          & 2\m{1}\m{3}4 & x_6x_{27}&=x_{27}x_{27}    & 21\m{3}\m{4}5  \\
x_3x_3&=x_3x_6       &  1\m{2}\m{3} & x_6x_9&=x_9x_{27}          & \m{2}1\m{3}4 & x_8x_{20}&=x_8x_{60}       & 2\m{1}\m{3}54  \\
x_2x_8&=x_4x_7       &  \m{1}3\m{2} & x_3x_{12}&=x_{12}x_{27}    & 21\m{3}\m{4} & x_9x_{20}&=x_9x_{60}       & \m{2}1\m{3}54  \\
x_2x_9&=x_5x_7       &  \m{1}\m{3}2 & x_7x_8&=x_7x_9             & \m{2}\m{1}43 & x_8x_{27}&=x_8x_{60}       & 21\m{3}5\m{4}  \\
x_3x_8&=x_3x_9       &  1\m{3}\m{2} & x_8x_{12}&=x_9x_{12}       & 21\m{4}\m{3} & x_9x_{27}&=x_9x_{60}       & 21\m{3}\m{5}4  \\
x_2x_8&=x_2x_9       &  \m{2}\m{1}3 & x_2x_{13}&=x_{13}x_{14}    & \m{2}34\m{1} & x_8x_{27}&=x_9x_{27}       & 21\m{4}\m{3}5  \\
x_3x_8&=x_4x_{12}    &  2\m{1}\m{3} & x_3x_{13}&=x_{13}x_{15}    & 23\m{4}\m{1} & x_{14}x_{44}&=x_{15}x_{44} & 45\m{2}\m{3}1  \\
x_3x_9&=x_5x_{12}    &  \m{2}1\m{3} & x_6x_{14}&=x_{14}x_{14}    & \m{2}\m{3}41 & x_{14}x_{44}&=x_{14}x_{60} & 45\m{3}\m{1}2  \\
x_2x_{13}&=x_4x_{14} &  \m{2}3\m{1} & x_6x_{15}&=x_{15}x_{15}    & 2\m{3}\m{4}1 & x_{15}x_{44}&=x_{15}x_{60} & 45\m{3}1\m{2}  \\
x_3x_{13}&=x_4x_{15} &  2\m{3}\m{1} & x_8x_{15}&=x_9x_{15}       & 2\m{4}\m{3}1 & x_{14}x_{51}&=x_{14}x_{60} & \m{4}5\m{3}12  \\
x_5x_{14}&=x_5x_{15} &  \m{2}\m{3}1 & x_4x_{12}&=x_8x_{12}       & 32\m{1}\m{4} & x_{15}x_{51}&=x_{15}x_{60} & 4\m{5}\m{3}12  \\
x_4x_{14}&=x_4x_{15} &  3\m{1}\m{2} & x_5x_{12}&=x_9x_{12}       & \m{3}21\m{4} & x_{19}x_{44}&=x_{44}x_{44} & 45\m{3}\m{2}1  \\
x_2x_{18}&=x_5x_{14} &  \m{3}\m{1}2 & x_8x_{14}&=x_9x_{14}       & \m{3}\m{2}41 & x_{14}x_{51}&=x_{15}x_{51} & 5\m{3}\m{4}12  \\
x_3x_{18}&=x_5x_{15} &  \m{3}1\m{2} & x_{13}x_{14}&=x_{13}x_{15} & 34\m{1}\m{2} & x_{19}x_{51}&=x_{51}x_{51} & 5\m{4}\m{3}12  \\
x_4x_4&=x_4x_{19}    &  3\m{2}\m{1} & x_{14}x_{18}&=x_{15}x_{18} & \m{3}\m{4}12 & x_{20}x_{60}&=x_{27}x_{60} & 21\m{3}\m{4}65 \\
x_5x_5&=x_5x_{19}    &  \m{3}\m{2}1 & x_4x_{13}&=x_{13}x_{44}    & 34\m{2}\m{1} & x_8x_{60}&=x_9x_{60}       & 21\m{4}\m{3}65 \\
x_2x_7&=x_7x_{20}    & \m{1}\m{2}43 & x_{14}x_{19}&=x_{14}x_{44} & \m{3}4\m{2}1 & x_{14}x_{65}&=x_{15}x_{65} & 26\m{3}\m{4}15 \\
x_6x_8&=x_8x_{20}    & 1\m{2}4\m{3} & x_{15}x_{19}&=x_{15}x_{44} & 3\m{4}\m{2}1 & x_8x_{65}&=x_9x_{65}       & 26\m{4}\m{3}15 \\
x_6x_9&=x_9x_{20}    & 1\m{2}\m{4}3 & x_2x_{18}&=x_{14}x_{18}    & \m{4}\m{1}23 & x_{14}x_{60}&=x_{15}x_{60} & 56\m{3}\m{4}12 \\
x_2x_8&=x_7x_8       & \m{1}3\m{2}4 & x_3x_{18}&=x_{15}x_{18}    & \m{4}12\m{3} & x_{44}x_{60}&=x_{51}x_{60} & 56\m{4}\m{3}12
\end{tabular}
\end{center}
\vfill
\clearpage
\newpage

~\vfill
\section{The thirty-seven measures} \label{s:meas}
\def\mp{+}
\def\mm{--}
\def\mz{$\cdot$}
The table gives the values of the 37 measures on the 10 classes in \eqref{eq:10vars}. To determine the value of a measure on a general $x_i$, express $x_i$ as a linear combination of these~10 classes using the relations in Appendix~\S \ref{s:linear}. We write \mz, \mp, \mm\ for 0, $+1$, and $-1$ below. The final column indicates the support of the measure (\S \ref{s:supp}).
\vskip 2ex
\begin{center}
\begin{tabular}{c|cccccccccc|c}
& $x_1$ & $x_2$ & $x_3$ & $x_4$ & $x_5$ & $x_8$ & $x_9$ & $x_{14}$ & $x_{15}$ & $x_{60}$ & Supp \\
\hline
1  &  \mz &  \mz &  \mz &  \mm &  \mz &  \mm &  \mz &  \mz &  \mz &  \mz
& \multirow{20}{*}{$\fP_1$} \\
2  &  \mz &  \mz &  \mz &  \mz &  \mm &  \mm &  \mz &  \mz &  \mz &  \mz \\
3  &  \mz &  \mz &  \mz &  \mm &  \mz &  \mz &  \mm &  \mz &  \mz &  \mz \\
4  &  \mz &  \mz &  \mz &  \mz &  \mm &  \mz &  \mm &  \mz &  \mz &  \mz \\
5  &  \mz &  \mm &  \mz &  \mz &  \mz &  \mz &  \mz &  \mm &  \mz &  \mz \\
6  &  \mz &  \mz &  \mm &  \mz &  \mz &  \mz &  \mz &  \mm &  \mz &  \mz \\
7  &  \mz &  \mm &  \mz &  \mz &  \mz &  \mz &  \mz &  \mz &  \mm &  \mz \\
8  &  \mz &  \mz &  \mm &  \mz &  \mz &  \mz &  \mz &  \mz &  \mm &  \mz \\
9  &  \mz &  \mm &  \mz &  \mz &  \mz &  \mz &  \mz &  \mz &  \mz &  \mm \\
10 &  \mz &  \mz &  \mm &  \mz &  \mz &  \mz &  \mz &  \mz &  \mz &  \mm \\
11 &  \mz &  \mz &  \mz &  \mm &  \mz &  \mz &  \mz &  \mz &  \mz &  \mm \\
12 &  \mz &  \mz &  \mz &  \mz &  \mm &  \mz &  \mz &  \mz &  \mz &  \mm \\
13 &  \mz &  \mz &  \mz &  \mm &  \mz &  \mm &  \mm &  \mz &  \mz &  \mp \\
14 &  \mz &  \mz &  \mz &  \mz &  \mm &  \mm &  \mm &  \mz &  \mz &  \mp \\
15 &  \mz &  \mm &  \mz &  \mz &  \mz &  \mz &  \mz &  \mm &  \mm &  \mp \\
16 &  \mz &  \mz &  \mm &  \mz &  \mz &  \mz &  \mz &  \mm &  \mm &  \mp \\
17 &  \mz &  \mp &  \mz &  \mm &  \mm &  \mm &  \mm &  \mz &  \mz &  \mp \\
18 &  \mz &  \mz &  \mp &  \mm &  \mm &  \mm &  \mm &  \mz &  \mz &  \mp \\
19 &  \mz &  \mm &  \mm &  \mp &  \mz &  \mz &  \mz &  \mm &  \mm &  \mp \\
20 &  \mz &  \mm &  \mm &  \mz &  \mp &  \mz &  \mz &  \mm &  \mm &  \mp \\
\hline
21 &  \mp &  \mz &  \mz &  \mz &  \mz &  \mm &  \mz &  \mz &  \mz &  \mz
& \multirow{7}{*}{$\fP_2$} \\
22 &  \mp &  \mz &  \mz &  \mz &  \mz &  \mz &  \mm &  \mz &  \mz &  \mz \\
23 &  \mp &  \mz &  \mz &  \mz &  \mz &  \mz &  \mz &  \mm &  \mz &  \mz \\
24 &  \mp &  \mz &  \mz &  \mz &  \mz &  \mz &  \mz &  \mz &  \mm &  \mz \\
25 &  \mp &  \mz &  \mz &  \mz &  \mz &  \mz &  \mz &  \mz &  \mz &  \mm \\
26 &  \mp &  \mz &  \mz &  \mz &  \mz &  \mm &  \mm &  \mz &  \mz &  \mp \\
27 &  \mp &  \mz &  \mz &  \mz &  \mz &  \mz &  \mz &  \mm &  \mm &  \mp \\
\hline
28 &  \mm &  \mm &  \mm &  \mz &  \mz &  \mz &  \mz &  \mm &  \mz &  \mz
& \multirow{4}{*}{$\fP_3$} \\
29 &  \mm &  \mm &  \mm &  \mz &  \mz &  \mz &  \mz &  \mz &  \mm &  \mz \\
30 &  \mm &  \mm &  \mm &  \mz &  \mz &  \mz &  \mz &  \mz &  \mz &  \mm \\
31 &  \mm &  \mm &  \mm &  \mz &  \mz &  \mz &  \mz &  \mm &  \mm &  \mp \\
\hline
32 &  \mm &  \mz &  \mz &  \mm &  \mm &  \mm &  \mz &  \mz &  \mz &  \mz
& \multirow{4}{*}{$\fP_4$} \\
33 &  \mm &  \mz &  \mz &  \mm &  \mm &  \mz &  \mm &  \mz &  \mz &  \mz \\
34 &  \mm &  \mz &  \mz &  \mm &  \mm &  \mz &  \mz &  \mz &  \mz &  \mm \\
35 &  \mm &  \mz &  \mz &  \mm &  \mm &  \mm &  \mm &  \mz &  \mz &  \mp \\
\hline
36 &  \mp &  \mp &  \mp &  \mm &  \mm &  \mm &  \mm &  \mz &  \mz &  \mp & $\fP_5$\\
37 &  \mp &  \mm &  \mm &  \mp &  \mp &  \mz &  \mz &  \mm &  \mm &  \mp & $\fP_6$
\end{tabular}
\end{center}
\vfill

\end{document}